\newtheorem{thm}{Theorem}[section]
\newtheorem*{jthm}{Theorem}
\newtheorem{cor}[thm]{Corollary}
\newtheorem{lem}[thm]{Lemma}
\newtheorem{prop}[thm]{Proposition}
\theoremstyle{definition}
\newtheorem{defn}[thm]{Definition}
\newtheorem{definition}[thm]{Definition}
\newcommand{\tec}{Teichm\"uller }
\renewcommand{\emph}[1]{{\it #1}}
\title[Meromorphic quadratic differentials and spiralling foliations]{Meromorphic quadratic differentials with complex residues and spiralling foliations}
\author{Subhojoy Gupta}
\author{Michael Wolf}
\address{Department of Mathematics, Indian Institute of Science, Bangalore 560012, India.} 
\email{subhojoy@math.iisc.ernet.in}
\address{Department of Mathematics, Rice University, Houston, Texas, 77005-1892, USA.}
\email{mwolf@rice.edu}
\date{\today}
\begin{document}
\setcounter{tocdepth}{4}
\maketitle

\begin{abstract}
A meromorphic quadratic differential with poles of order two, on a compact Riemann surface, induces a  measured foliation on the surface, with a spiralling structure at any pole that is determined by the complex residue of the differential at the pole.  We introduce the space of such measured foliations, and prove that for a fixed Riemann surface,  any such foliation is realized by a quadratic differential with second order poles at marked points. Furthermore, such a differential is uniquely determined if one prescribes complex residues at the poles  that are compatible with the transverse measures around them. This generalizes a theorem of Hubbard and Masur concerning  holomorphic quadratic differentials on closed surfaces, as well as a theorem of Strebel for the case when the foliation has only closed leaves. The proof involves taking a compact exhaustion of the surface, and considering a sequence of equivariant harmonic maps to real trees that do not have a uniform bound on total energy. 

\end{abstract}

\section{Introduction}

A holomorphic quadratic differential on a Riemann surface induces a singular foliation  equipped with a transverse measure. 
For a closed Riemann surface $\Sigma$, the Hubbard-Masur theorem (\cite{HubbMas}) asserts that this correspondence provides a homeomorphism between the space of holomorphic quadratic differentials and the space of equivalence classes of measured foliations on $\Sigma$. Both provide useful descriptions of the space of infinitesimal directions in Teichm\"{u}ller space at the point $\Sigma$, and the theorem established an important bridge between the complex-analytic and topological aspects of the  theory. The purpose of this article is to provide a generalization
of this result  to quadratic differentials with poles of order two.

For this, we shall introduce a space $\mathcal{MF}_2$ of  the corresponding topological objects on the surface, namely equivalence classes of measured foliations with \textit{centers} (disks foliated by closed circles or spirals - see Figure 3) around finitely many marked points. The data for such a foliation includes the transverse measure of a loop linking each distinguished point, in addition to the transverse measures of a system of arcs  on the surface that provide coordinates on the usual space of measured foliations introduced by Thurston  (see \cite{FLP}). 

On the complex-analytic side, at a pole $p$ of order two, a quadratic differential  is locally of the form :
\begin{equation}\label{ord2}
\left(\frac{a^2}{z^2} \right)dz^2\text{   where  } a\in \mathbb{C}^\ast 
\end{equation}
with respect to a  choice of coordinate chart around the pole.  

\begin{defn}[Residue]\label{defn:res} The constant $a$ in (\ref{ord2}) shall be called the \textit{(complex) residue} of the differential at the pole.  The ambiguity of sign of the square root is resolved by choosing the one that makes the real part  of $a$ positive, or in case it is zero,  the sign such that the imaginary part of $a$ is positive.  We shall denote the residue at a pole $p$ by $\text{Res}(p)$.
\end{defn} 

 The residue is in fact a coordinate-independent quantity. We also note that the coefficient $a^2$ is  often called the residue (or \textit{quadratic} residue) in the literature; the present definition makes subsequent notation less cumbersome.
 For example, the transverse measure of a loop around the pole, for the induced measured foliation, is a constant multiple of the real part of the residue as defined above (see (\ref{trans}) in  \S2.2).  \\

We shall prove: 

\begin{thm}\label{main} Let $\Sigma$ be a closed Riemann surface of genus $g$, with marked points $P=\{p_j\}_{1\leq j\leq n}$, such that $2g-2 + n>0$. Let $F\in \mathcal{MF}_2(\Sigma, P)$ be a measured foliation on $
\Sigma$ with centers at points of $P$. 
Then for any tuple  $(c_1, c_2, \ldots c_n) \in \mathbb{R}^n_+$ there is a unique meromorphic quadratic differential with 
\begin{itemize}
\item a pole of order two at each $p_j$,
\item an induced measured foliation that is equivalent to $F$, and 
\item a residue at $p_j$ having a prescribed imaginary part, namely,
\[
 \Im ( \text{Res}(p_j) ) =
\left\{
\!
\begin{aligned}
 c_j & \text{  if  } \Re ( \text{Res}(p_j) ) = 0\\
\ln c_j   & \text{  if  } \Re ( \text{Res}(p_j) ) > 0\\
\end{aligned}
\right.
\]

for each $1\leq j\leq n$.
\end{itemize}
\end{thm}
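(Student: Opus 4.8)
The plan is to realize the desired differential as the Hopf differential of an equivariant harmonic map from the universal cover of the punctured surface to the $\mathbb{R}$-tree dual to $F$, following the tree-theoretic proof of the Hubbard--Masur theorem but now on a noncompact surface with ends of infinite energy. Write $\Sigma^\ast = \Sigma \setminus P$. Lifting $F$ to $\widetilde{\Sigma^\ast}$ and passing to the leaf space of the lifted foliation produces an $\mathbb{R}$-tree $T = T_F$ carrying an isometric action of $\pi_1(\Sigma^\ast)$, in which the transverse measure of $F$ records distance along $T$. Recall that an equivariant harmonic map $u\colon \widetilde{\Sigma^\ast} \to T$ has a holomorphic Hopf differential whose horizontal foliation is the pullback of the fiber foliation; thus constructing the right harmonic map will simultaneously produce a holomorphic quadratic differential on $\Sigma^\ast$ realizing $F$. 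The content of the theorem is then to show that this differential extends meromorphically over $P$ with poles of order exactly two and the prescribed residues.

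First I would fix the local model. Near a pole the differential $(a^2/z^2)\,dz^2$ has natural coordinate $w = a\log z$, so a punctured neighborhood is a half-infinite flat cylinder whose core curve has transverse measure $2\pi\,\Re(a)$ and whose twist is governed by $\Im(a)$; the case $\Re(a)=0$ gives closed leaves (a \emph{center} of concentric circles) and $\Re(a)>0$ gives leaves spiralling into the puncture. This model tells me both the asymptotic shape of the harmonic map at each end and the meaning of the two prescribed quantities: the real part of each residue is forced by the linking-loop transverse measure built into $F$, while the imaginary part --- the twist/spiralling rate --- is the free parameter encoded by $c_j$.

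Because the transverse measure, hence the energy, is infinite near a spiralling end, I cannot minimize energy globally. Instead I would take a compact exhaustion $\Sigma_1 \Subset \Sigma_2 \Subset \cdots$ of $\Sigma^\ast$ by complements of shrinking disks around the $p_j$, solve the equivariant harmonic map problem from each $\widetilde{\Sigma_k}$ to $T$ with boundary data modeled on the flat-cylinder end above (so that the twist realizing the target value $c_j$ is imposed along the boundary), and obtain maps $u_k$. The heart of the argument is to pass to the limit: one proves interior and near-boundary energy estimates that are uniform on compact subsets of $\Sigma^\ast$ (even though the total energies $E(u_k)\to\infty$), extracts a subsequence converging in $C^\infty_{\mathrm{loc}}$ to an equivariant harmonic map $u$, and shows that its Hopf differential $q$ extends across each $p_j$ with a pole of order exactly two and coefficient $a^2$ matching the prescribed residue. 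This last step --- controlling the asymptotics of $u$ so that $q$ is neither regular nor of higher pole order, while pinning the imaginary part of the residue --- is the main obstacle, precisely because the absence of a uniform energy bound forces a delicate matching against the half-cylinder model rather than a soft compactness argument.

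Finally, uniqueness. Given two admissible differentials $q_1,q_2$ with the same foliation $F$ and the same residues, their equivariant harmonic maps to the common tree $T$ should have the same Hopf differential: harmonic maps into a nonpositively curved target are determined up to the limited ambiguity of the tree setting, and the prescribed residue data removes the remaining freedom coming from the twisting of the cylindrical ends. Since a quadratic differential is recovered from its harmonic map's Hopf differential, this gives $q_1=q_2$. I expect the two normalizations to reflect the sign convention of Definition~\ref{defn:res}: for a closed-leaf end $\Im(\mathrm{Res})$ is positive and the linear parameter $c_j\in\mathbb{R}_+$ suffices, whereas for a spiralling end $\Im(\mathrm{Res})$ may take any real value, so writing it as $\ln c_j$ lets the single positive parameter $c_j$ sweep out all admissible twists --- making $\Im(\mathrm{Res})$ a well-defined, freely prescribable coordinate and thereby pinning down uniqueness.
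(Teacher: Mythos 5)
You have the paper's architecture exactly right (dual $\mathbb{R}$-tree, compact exhaustion, limiting equivariant harmonic map, Hopf differential), but at the two junctures you yourself flag as ``the heart'' and ``the main obstacle'' you only name the difficulty without supplying an idea, and these are genuine gaps rather than routine details. Since the total energies $\mathcal{E}(u_k)\to\infty$, no soft interior estimate yields your claimed ``uniform energy estimates on compact subsets''; the paper's mechanism is a cancellation that your outline does not contain. The approximating maps minimize energy for their boundary data, so comparing against a candidate map (the fixed map on $\Sigma_0$ glued to the model map $m_{R,\alpha}$ on $A_n$) gives $\bar{\mathcal{E}}(\widetilde{h}_n)\leq \bar{\mathcal{E}}(\widetilde{h}_0)+\frac{1}{2}R^2\,\mathrm{mod}(A_n)$; the matching \emph{lower} bound $\bar{\mathcal{E}}(\widetilde{h}_n\vert_{\widetilde{A}_n})\geq \frac{1}{2}R^2\,\mathrm{mod}(A_n)-K_0$ (Lemma~\ref{lower}) then forces the uniform bound on $\Sigma_0$ (Lemma~\ref{ebd}). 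That lower bound is itself nontrivial: it needs a length--area estimate tracking both meridional and longitudinal stretch (Proposition~\ref{minim}), fed by transverse-measure inputs which require showing that the spiralling annulus (the ``truncated center'' of $q_n$) cannot retreat into $\Sigma_0$ --- the ``tongue'' argument of Lemma~\ref{transv}(c). Nothing in your proposal substitutes for this, and without it both the convergence and the pole-order computation (energy linear, not exponential, in $\mathrm{mod}(A_n)$, which is what rules out poles of order $>2$) are unsupported.

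The residue-pinning and uniqueness steps have the same lacuna. For the residue, the paper bounds $d_T(\widetilde{h}_n,\widetilde{m}_{R,\alpha})$ on the annuli by controlling it on both boundary circles via the transverse-measure estimates (Corollary~\ref{cor5.2}) and then invoking subharmonicity of the distance function plus the maximum principle (Lemma~\ref{modelBound}); Lemma~\ref{dist} (two model maps are bounded distance apart iff their foliation angles agree) then converts ``bounded distance from the model'' into the residue, with a separate diameter-over-modulus argument in the closed-center case. For uniqueness, your statement that harmonic maps into NPC targets are ``determined up to the limited ambiguity of the tree setting'' is false on a noncompact surface with infinite energy; what actually works is that equal residues force both maps within bounded distance of one common model map (Lemma~\ref{resmod}), so their mutual distance is a bounded subharmonic function on a potential-theoretically parabolic punctured surface, hence constant, hence zero by local analysis at a prong. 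Two further points: (i) you construct $u_k$ as Dirichlet solutions with model boundary data, whereas the paper doubles $\Sigma_n$ and invokes Hubbard--Masur, which guarantees each approximating map collapses along $F_n$; in your version you still owe an argument that the limit's collapsing foliation is equivalent to $F$ (the paper checks this via intersection numbers with all simple closed curves); (ii) $C^\infty_{\mathrm{loc}}$ convergence is not meaningful for tree targets --- the paper obtains uniform convergence via Courant--Lebesgue equicontinuity and Korevaar--Schoen compactness, bootstrapping only the holomorphic Hopf differentials from $C^0$ to $C^1$.
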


\textit{Remarks.} (i)  In the case when the real part of the residue is non-zero (and hence positive, by our sign convention), a neighborhood of the pole is foliated by leaves that spiral into it. The spiralling nature is \textit{not} captured by the parameters (namely, transverse measures of loops) determining the measured foliation $F$; indeed, different spiralling behaviour at the pole are isotopic to each other (see \S4). It is only the total transverse measure around the pole which is fixed by the choice of $F$, and this determines the real part of the residue. The free parameter is therefore the imaginary part of the residue; this then determines the spiralling behaviour, and its  sign represents the ``handedness" of the spiralling, that is, clockwise or counter-clockwise with respect to the orientation on the surface.  \\


(ii) The condition of prescribing the imaginary part of the residue can be replaced with the geometric requirement of
 \begin{enumerate}
  \item  prescribing the circumference of the cylinder surrounding the pole $p_j$  in the induced singular-flat metric (see \S2.2)  and
   \item prescribing  the ``handedness" of the spiralling mentioned in (i) above. 
   \end{enumerate}
   Note that the circumference in (1) must not be less than the total transverse measure around the pole.\\

(iii)  A special case is when the foliation $F$  has  the global property that all leaves are closed (excepting a connected critical graph) and foliate punctured disks around each pole. In this case, the transverse measure of any loop around a pole vanishes, since each linking loop can be chosen to be a closed leaf of $F$  (which has no transverse intersection with the foliation). From the relation between the residue and transverse measure, it follows that the real part of the residue is zero. Theorem \ref{main} then asserts that an additional positive real parameter (the imaginary part of the residue) uniquely determines a differential realizing $F$. This recovers a well-known  theorem of Strebel (see Theorem 23.5 of  \cite{Streb} or \cite{Liu}).\\

 The present paper deals with the case of poles of order \textit{exactly} two. The case of  order-$1$ poles, that we elide,  falls under the purview of classical theory: the corresponding quadratic differentials are then integrable, with the induced foliation having a ``fold" at the puncture.  The Hubbard-Masur theorem  can be applied after taking a double cover of the surface  branched at such poles. \\

The proof of Theorem \ref{main} involves the technique of harmonic maps to $\mathbb{R}$-trees used by  one of us (\cite{Wolf2}) in an alternative  proof of the Hubbard-Masur theorem. Such an $\mathbb{R}$-tree forms the leaf-space of the measured foliation when lifted to the universal cover $\widetilde{\Sigma}$, and the  required quadratic differential is then recovered as (a multiple of) the Hopf differential of the equivariant harmonic map from $\widetilde{\Sigma}$ to the tree. 
 The construction of the $\mathbb{R}$-tree as the leaf-space  ignores the differences in foliations caused by Whitehead moves; thus the analysis avoids issues relating to the presence of higher order zeroes of the differential, and their possible decay under deformation to multiple lower order zeroes, that was a difficulty in the approach of Hubbard and Masur. 

In the present paper the main difficulty lies in the fact that  surfaces have punctures and the corresponding harmonic maps  have infinite energy. The strategy for the proof of existence is to establish uniform energy bounds for harmonic maps defined on a compact exhaustion of the surface. 
 In the conclusion of the proof, some additional work is needed to verify that the Hopf differential of the limiting map has the desired complex residue. 

A similar strategy was used in (\cite{GW1}) for a generalization of Strebel's theorem (mentioned above)  to the case of quadratic differentials with poles of higher order.   There,  the global ``half-plane structure" of the foliation allowed us to consider harmonic maps from the punctured Riemann surface $(\Sigma \setminus P)$ to a $k$-pronged tree.  
In this paper we consider arbitrary foliations, which necessitates working in the universal cover and considering a target space that is a more general $\mathbb{R}$-tree. Furthermore, a difficulty unique to the case of double order poles is the ``spiralling" nature of the foliations at a (generic) such pole.

There are two novel technical elements in meeting the difficulties of the infinite energy in this setting of harmonic maps for such spiralling foliations. First, we use a version of the length-area method that is adapted to the collapsing maps for such foliations; the resulting estimate (Proposition~\ref{minim}) takes into account  both horizontal and vertical stretches.
Second, we need to control the supporting annulus of the spirals or twists for a map in the approximating sequence: this is done (Lemma~\ref{transv}) by using transverse measures (that determine distances in the target $\mathbb{R}$-tree) to bound the size of the subsurface to which the twisting can retreat.

Harmonic maps provides a convenient analytical tool in this paper; apart from the advantage 
over the approach of Hubbard-Masur in avoiding issues related to Whitehead moves mentioned earlier, a crucial fact used in the proof of the  uniform energy bound  (Lemma \ref{ebd}) is that for a non-positively curved target, such a map is unique and energy-minimizing.

In the forthcoming work \cite{GW2} we use ideas in these papers to prove a full generalization of the Hubbard-Masur theorem to quadratic differentials with higher-order poles.\\

\textbf{Acknowledgements.} The authors thank the anonymous referee for very helpful comments, and gratefully acknowledge support for collaborative travel by NSF grants DMS-1107452, 1107263, 1107367 ``RNMS: GEometric structures And Representation varieties" (the GEAR network). The first author was supported by the Danish Research Foundation Center of Excellence grant and the Centre for Quantum Geometry of Moduli Spaces (QGM), as well as the hospitality of MSRI (Berkeley).  He would like to thank Daniele Allessandrini and Maxime Fortier-Bourque for helpful discussions. The second author gratefully acknowledges support from the U.S.~National Science Foundation (NSF) through grants DMS-1007383 and DMS-1564374.

\section{Preliminaries}

\subsection{Quadratic differentials, measured foliations}

Let $\Sigma$ be a closed Riemann surface of genus $g\geq 2$.

\begin{defn} A \textit{meromorphic quadratic differential} on $\Sigma$  is a $(2,0)$-tensor that is locally of the form $q(z)dz^2$ where $q(z)$ is meromorphic.  Alternatively, it is a meromorphic section of $K^{\otimes 2}$, where $K$ is the canonical line bundle on $\Sigma$. 
\end{defn}

By Riemann-Roch, the complex vector space of meromorphic quadratic differentials with poles at points $p_1,p_2,\ldots p_n$ of orders at most $k_1,k_2,\ldots k_n$ respectively, has (complex) dimension $3g-3 + \sum\limits_{i=1}^n k_i$.\\

\begin{defn}\label{hf} The \textit{horizontal foliation} induced by a quadratic differential $q$ is a (singular) foliation on the surface whose leaves are the integral curves of the line field along which the quadratic differential is real and positive (that is, $\pm v\in T\Sigma$ such that $q(v \otimes v) \in \mathbb{R}_+$). The singularities are at the zeros of $q$, where there is a ``prong"-type singularity (see Figure 1), and at the poles (for poles of order two, as in this paper, see Figure 3).

\end{defn}

\begin{defn} The \textit{singular-flat metric} induced by a quadratic differential $q$ is a singular metric $ds^2_q$ which is locally of the form $ds^2_q = \lvert q(z)\rvert \lvert dz^2 \rvert$.  The singularities of the metric are at the zeros 
and first order poles of $q$, where there is a cone-angle $\pi(k+2)$ where $k$ is the order of the zero or pole. The metric is complete in the neighborhood of a second order pole; we can choose such a neighborhood to be isometric to a portion of a half-infinite cylinder (see \S2.2). 
\end{defn}

The \textit{horizontal} and \textit{vertical} lengths of an arc $\alpha$ are defined to be 
 the absolute values of the real and imaginary parts, respectively:
 \begin{equation}\label{meas}
 l_{h}(\alpha) =  \lvert \Re  \displaystyle\int\limits_\alpha \sqrt{ q(z)} dz \rvert,\text{ } l_{v}(\alpha) =  \lvert \Im \displaystyle\int\limits_\alpha \sqrt{ q(z) }dz\rvert
 \end{equation}
 where we note that the length of  a geodesic arc $\alpha$, which avoids singularities, in the induced metric is $l(\alpha) = \left(l_h(\alpha)^2 + l_v(\alpha)^2\right)^{1/2}$.\\

 \begin{figure}
  \centering
  \includegraphics[scale=1]{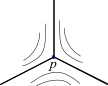}\\
 \caption{The induced (horizontal) foliation at a simple zero has a $3$-prong singularity with a cone angle of $3\pi$.}
\end{figure}

The horizontal foliation $\mathcal{F}(q)$ induced by a quadratic differential $q$  (Defn. \ref{hf})  can be equipped with a \textit{transverse measure}, namely the vertical length of any  transverse arc. Thus, the foliation $\mathcal{F}$ is \textit{measured} in the following sense:

\begin{defn}[Measured foliation]\label{mf} A measured foliation on a smooth surface $S$ of genus $g\geq 2$ is a smooth foliation with finitely many ``prong" singularities, equipped with a measure on transverse arcs that is invariant under transverse homotopy.  A pair of measured foliations are said to be \textit{equivalent}, if they differ by an isotopy of leaves and Whitehead moves (see Figure 2).

The space $\mathcal{MF}$ of equivalence classes of measured foliations  is homeomorphic to $\mathbb{R}^{6g-6}$. (See for example,  Expos\'{e} 6 of \cite{FLP}.)

One way to obtain this parameterization is to  consider a pants decomposition of the surface, and use the transverse measures of each of the $3g-3$ pants curves, together with the \textit{twisting number} of each. Here, the twisting number of a curve can be defined to be the nonnegative real number that is the transverse measure across a maximal annulus on the surface with a core curve homotopic to the given one. This is known as the Dehn-Thurston parametrization (see \cite{DehnThur} and \cite{PenHar} for details).

The same definition of a measured foliation as above holds for a surface with boundary; the leaves of the foliation may be parallel or transverse at the boundary components, and prong singularities may also lie on the boundary.
\end{defn}

\textit{Notation.} In this paper the \textit{induced measured foliation} for a (meromorphic) quadratic differential $q$ shall mean its horizontal foliation, denoted $\mathcal{F}(q)$. \\

As mentioned in the introduction, this article is motivated by the following correspondence between measured foliations and \textit{holomorphic} quadratic differentials.

\begin{jthm}[Hubbard-Masur \cite{HubbMas}]  Given a closed Riemann surface $\Sigma$ of genus $g\geq 2$, and a measured foliation $F\in \mathcal{MF}$, there is a unique holomorphic quadratic differential whose induced measured foliation is equivalent to $F$. 
\end{jthm}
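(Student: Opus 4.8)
The plan is to follow the harmonic-map proof of \cite{Wolf2}: realize the desired differential as (a constant multiple of) the Hopf differential of an equivariant harmonic map from the universal cover to the $\mathbb{R}$-tree dual to $F$. Since $\Sigma$ is closed, all the relevant energies are finite, so the analysis is far simpler than for Theorem~\ref{main}; the foliation $F$ supplies both the target tree and, through its transverse measure, the metric on it. First I would fix a conformal (say hyperbolic) metric on $\Sigma$, lift $F$ to a $\pi_1(\Sigma)$-invariant measured foliation $\widetilde{F}$ on $\widetilde{\Sigma}=\mathbb{H}$, and form the leaf space $T=T_F$ by collapsing each leaf, and each complementary region of the critical graph, to a point. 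The transverse measure of $\widetilde{F}$ descends to a metric making $T$ an $\mathbb{R}$-tree, and the deck action of $\Gamma=\pi_1(\Sigma)$ induces an isometric action $\rho\colon\Gamma\to\operatorname{Isom}(T)$. Because $F$ is a genuine measured foliation this action is minimal with no global fixed point, and since a Whitehead move on $F$ leaves $T$ and $\rho$ unchanged, the construction depends only on the equivalence class of $F$.

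Next I would apply the existence theory for equivariant harmonic maps into non-positively curved spaces (Korevaar--Schoen, Gromov--Schoen), using that an $\mathbb{R}$-tree is $\operatorname{CAT}(0)$: among finite-energy $\rho$-equivariant maps $\widetilde{\Sigma}\to T$ the energy attains its infimum, and the absence of a fixed point for $\rho$ forces the minimizer $h$ to be non-constant. The Hopf differential $\Phi=(h^\ast\, ds_T^2)^{(2,0)}$ is then a holomorphic quadratic differential on $\widetilde{\Sigma}$ — the holomorphicity being precisely the Euler--Lagrange content of harmonicity for a tree target — and by $\rho$-equivariance it descends to a holomorphic quadratic differential $q$ on $\Sigma$.

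It remains to identify the foliation. Away from the branch set of $T$ and the zeros of $\Phi$, the harmonic map factors locally through an affine projection onto a segment of $\mathbb{R}\subset T$; hence the fibers of $h$ are exactly the leaves of the vertical foliation of $\Phi$, and $d_T(h(p),h(q))=\int|dh|$ reproduces the transverse measure of $\widetilde{F}$. This identifies the vertical foliation of $\Phi$, with its measure, with $\widetilde{F}$; after the positive rescaling and sign change $q=-c\,\Phi$ (which interchanges the horizontal and vertical foliations and normalizes the transverse measure) the horizontal foliation $\mathcal{F}(q)$ equals $F$. Uniqueness is the cleanest step: two differentials realizing $F$ give $\rho$-equivariant harmonic collapsing maps to the same tree $T_F$, and harmonic maps into an NPC target are unique in their equivariance class, so the maps — hence their Hopf differentials, hence the differentials themselves — coincide.

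The main obstacle is the identification step: making rigorous, via the singular regularity theory for harmonic maps to trees, the assertion that the vertical foliation of $\Phi$ together with its transverse measure reproduces $F$ exactly. The naive ``projection to $\mathbb{R}$'' model fails along the branch locus of $T$ and at the zeros of $\Phi$, so one must analyze the local structure of $h$ there and verify that the leaf space of the vertical foliation of the Hopf differential is genuinely the metric tree $T_F$; controlling the measure, and not merely the topological foliation, is what pins $q$ down uniquely.
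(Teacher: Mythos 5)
Your proposal is correct and is essentially the harmonic-maps proof of the Hubbard--Masur theorem from \cite{Wolf2}, which is precisely the route this paper itself recalls in \S 2.3: metrize the dual tree $T$ of $\widetilde{F}$ by the transverse measure, use compactness of $\Sigma$ to get a finite-energy equivariant energy minimizer, and let its Hopf differential descend to the desired differential, with uniqueness via subharmonicity of the distance between two equivariant harmonic maps (cf.\ \S 5.1 of the paper). Your closing caveat is well placed --- the measure-level identification of the induced foliation with $F$ is the genuinely technical step --- and the one small correction is that your blanket appeal to uniqueness of equivariant harmonic maps into NPC targets is not true in that generality (it can fail for flat targets), so it should be replaced by the tree-specific subharmonic-distance argument plus the local analysis at prong singularities, which is exactly how \cite{Wolf2} and this paper argue.
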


\begin{figure}
  \centering
  \includegraphics[scale=0.35]{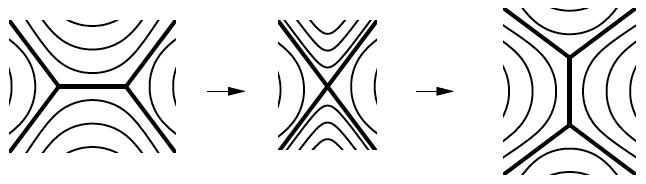}\\
 \caption{A Whitehead move preserves the equivalence class of a measured foliation.}
\end{figure}

\subsection{Structure at a double order pole}
In this article, we shall be concerned with quadratic differentials with poles of order two, that is,  when the local expression of the differential at the pole is:
\begin{equation}\label{ord2gen}
\left(\frac{a^2}{z^2} +  \frac{c_1}{z} + c_0 + \cdots \right)dz^2
\end{equation}
where $a \in \mathbb{C}^\ast, c_0, c_1 \in \mathbb{C}$. 
Note that  it is always possible to choose a coordinate $z$ where this expression is of the form (\ref{ord2}) - see \S7.2 of \cite{Streb}. Here  the complex constant $a$, with a choice of sign as defined in Definition \ref{defn:res}, is the \textit{residue} at the pole, and is easily seen to be independent of choice of coordinate chart. \\

\textbf{Induced measured foliation.} There is always a neighborhood $U$ of an order two pole such that any horizontal trajectory in $U$ continues to the pole in one direction, unless the trajectories close up to form closed leaves foliating the neighborhood. 
 We shall refer to such a neighborhood as a \textit{sink neighborhood} of the foliation.  See Chapter III, \S7.2 of \cite{Streb} for details. 

 There are three possibilities for what the induced foliation looks like in a sink neighborhood of an order two pole, depending on the residue, as defined in Definition \ref{defn:res}:
\begin{itemize}
\item $\Re(a) = 0$: The foliation is by concentric circles around the pole. In particular, each leaf in $U$ is closed.

\item $\Im(a) = 0$: The foliation is by radial lines from the pole. 

\item $\Re(a) \neq 0 \text{ and } \Im(a) \neq 0$: Each horizontal leaf  in $U$ spirals into the pole. 

\end{itemize}
See Figure 3 and section 4 of \cite{MulPenk} for examples.\\

The transverse measure of  a loop $\gamma$ linking the pole is computed by an integral as in (\ref{meas}). We obtain:
\begin{equation}\label{trans}
\tau(\gamma) =  \lvert \Im \left( \displaystyle\int_{\gamma} \sqrt{q}\right)\rvert  = \lvert \Im(2\pi \mathsf{i} a) \rvert  =  2\pi \Re (a) 
\end{equation}
and is thus determined by the real part of the complex residue. (Recall from Definition \ref{defn:res} that we have chosen the sign of the residue such that the real part is non-negative.) \\

\begin{figure}
  \centering
  \includegraphics[scale=0.35]{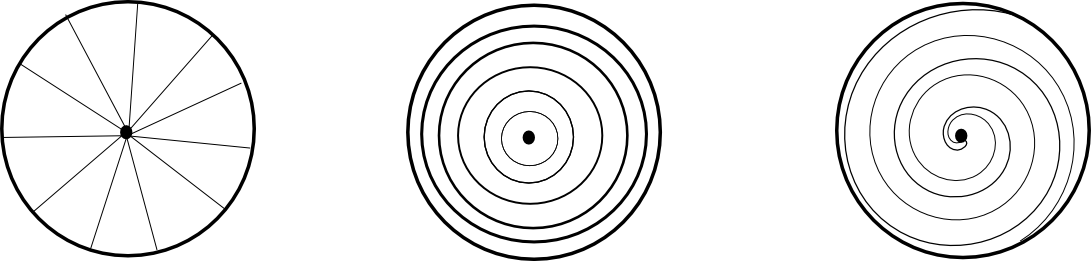}\\
 \caption{The induced foliation at a double order pole.}
\end{figure}

\textbf{Metric}.  By a similar computation using  (\ref{meas}), one can check that  in the induced singular-flat metric,  a sink neighborhood of the pole is a half-infinite Euclidean cylinder of circumference $2\pi \lvert a\rvert$, where $a \in \mathbb{C}^\ast$ is the residue.

\subsection{Harmonic maps to $\mathbb{R}$-trees}

Let $F$ be a measured foliation on a smooth surface $S$ (see Defn. \ref{mf}). The leaf space of its  lift $\tilde{F}$ to the universal cover, metrized by the projection along the leaves of its transverse measure, defines an $\mathbb{R}$-tree: namely, a simply connected metric graph  $T$ such that any two points of $T$ has  a unique geodesic (isometric embedding of a segment) between them. 
For definitions and details we refer to \cite{WolfT}; see also \cite{Kap} for more background on the construction and uses of $\mathbb{R}$-trees, and \cite{Sun}  for some useful analytical background, including restrictions on the shape of the image of a harmonic map.  See \cite{Schoen} for an insightful treatment of the following notion of Hopf differentials, and \cite{DaWen} for a very nice survey of both results and techniques in the uses of harmonic maps in \tec theory. Jost \cite{Jost2} provides detailed proofs of some of the results mentioned in the \cite{DaWen} survey, together with a tasteful choice of useful techniques and perspectives.

\begin{defn}[Harmonic map, Hopf differential]  Let $X$ be a (possibly non-compact)  Riemann surface. A map $h:X  \to T$ that has locally square-integrable derivatives  is \textit{harmonic} if it is a critical point of the energy functional
\end{defn}

\begin{equation}\label{endef}
\mathcal{E}(f)  = \displaystyle\int\limits_X\left( \lvert h_z\rvert^2 + \lvert h_{\bar{z}}\rvert^2 \right)  dzd\bar{z}
\end{equation}
in the case when this integral takes a finite value. 

When this energy is infinite, as in a number of situations in this paper, we adapt the definition to say a map is harmonic  when the above holds for the restriction of the energy functional to any relatively compact set $U$ in $X$. Namely,  the derivative of the above energy functional, {\it restricted to $U$}, vanishes for  any  variation of the map with compact support in $U$.\\

\textit{Notation.} In later sections, when we define equivariant maps on the universal cover of a surface, $\bar{\mathcal{E}}(\widetilde{f})$ shall denote the \textit{equivariant} energy, that is, the energy of the restriction of $\widetilde{f}$  to a fundamental domain of the action. (In particular, if the surface is compact, then the equivariant energy is finite.)\\

For a harmonic map $h$ with locally square-integrable derivatives, we define the \textit{Hopf differential} as:
\begin{equation*}
\text{Hopf}(h) = -4 (h_z)^2 dz^2.
\end{equation*}
This quadratic differential is then locally in $L^1$ on the Riemann surface, and Schoen \cite{Sch} points out the stationary nature of the map $h$ implies that this differential is weakly holomorphic, hence strongly holomorphic by Weyl's Lemma. Thus it is a classical holomorphic quadratic differential with isolated zeroes (if not identically zero) and smooth horizontal and vertical foliations. Now, in the convention chosen (see the remark below),
 the horizontal foliation relates to the harmonic map
 as integrating the directions of minimal stretch of the differential $dh$ of the harmonic map $h$, while the vertical foliation integrates the directions of maximal stretch of $dh$. (See e.g. \cite{Wolf0} for the easy computations which justify these geometric statements.)  Thus, when the harmonic map $h$ is 
 a projection to a tree, 
 the minimal stretch direction lies along the kernel of the differential map $dh$, and thus the horizontal leaves are the level sets of points in the tree. We see then that, away from the isolated zeroes of the Hopf differential, the harmonic map takes disks to geodesic segments in the $\mathbb{R}$-tree $T$.

\textit{Remark.} The constant in the definition above is chosen such that the Hopf-differential of the map $z\mapsto \Im z$ is $dz^2$, that is, the Hopf-differential realizes the horizontal foliation.  In particular, the sign convention  differs from the one used in, say \cite{Wolf2}, and in other places in the harmonic maps literature.\\

In particular, for a holomorphic quadratic differential $q$  on $\Sigma$, let $c= c(q):\tilde{\Sigma} \to T$ be the collapsing map to the leaf space of the lift of the measured foliation $\mathcal{F}(q)$, to the universal cover. Namely, the map takes each leaf to a unique point in $T$.  Recall that the $\mathbb{R}$-tree $T$ is metrized: distance is given by the pushforward of the transverse measure. The collapsing map is harmonic in the sense above: locally, it is the harmonic function $z\mapsto \Im(z)$,  and  the Hopf differential recovers the quadratic differential $q$, as a the reader can easily verify.  The energy (\ref{endef}) equals half the area of the domain in the singular-flat metric induced by the differential. As in the previous paragraph, geometrically the horizontal foliation is along the directions of least stretch for the harmonic map, and we shall refer to this as the \textit{collapsing foliation}. \\

The above observation has been used by one of us (\cite{Wolf2}) in a proof of the Hubbard-Masur Theorem (stated in \S2.1). Namely, for a Riemann surface $\Sigma$ and measured foliation $F$,  let $T$ be the leaf space of the lift of the foliation to the universal cover,  metrized by the transverse measure, and consider the Hopf differential of the energy-minimizing map $h:\tilde{\Sigma} \to T$  that is equivariant with respect to the surface group action on both spaces. To prove such an energy-minimizing harmonic map exists, one crucially needs the surface is compact, which provides a global energy bound. This Hopf differential of the energy-minimizer then descends to $\Sigma$ to give the required holomorphic quadratic differential.
In this paper, we adapt the strategy above for the corresponding measured foliations, and their harmonic collapsing maps of \textit{infinite energy};
more precisely, we study limits of sequences $h_n$ of harmonic maps from a compact exhaustion of a punctured surface. There will be no {\it a priori} bound on the total energies of the elements of these sequences, so we will need a finer analysis to control the shapes of the maps.

\section{Measured foliations with centers}

 Let $S$ be a smooth surface of genus $g$ with a set $P$ of marked points such that $\lvert P \rvert = n$.  We shall henceforth assume that  the Euler characteristic of the punctured surface is negative, namely $\chi(S\setminus p) = 2- 2g - n  <0$, so the surface admits a complete hyperbolic metric. 
 
 We refer to the previous section for the usual notion of measured foliations.

\begin{defn}[Center]\label{ctr}  A \textit{center} is a foliation on the  punctured unit disk $\mathbb{D}^\ast$ that is isotopic, relative to the boundary, to one of the two standard foliations:
\begin{enumerate}

\item{(Radial center)} A foliation by radial lines to the puncture, defined by the kernel of the smooth $1$-form $\Im(z^{-1}dz)$,
\item{(Closed center}) A foliation by concentric circles around the puncture, defined by the kernel of the smooth $1$-form $\Re(z^{-1}dz)$.
\end{enumerate}
\end{defn}

\textit{Remark.}  As discussed in \S2.2, a pole of order two as in (\ref{ord2}) induces a foliation on $\mathbb{D}^\ast$ that generically has leaves that spiral into the puncture. Any such spiralling foliation is isotopic in the punctured disk, to a radial center: namely, the smoothly varying family of foliations induced by
\begin{equation*}
\frac{(\Re a + (1-t) \Im a)^2}{z^2} dz^2\text{   for  } 0\leq t\leq 1 
\end{equation*}
has the initial foliation at $t=0$, and a radial center at $t=1$.

\begin{defn} A \textit{measured foliation on $S$ with centers at $P$} is  a smooth foliation on $S$ away from finitely many singularities such that
\begin{itemize}
\item a neighborhood of each point of $P$ is a center,
\item all other singularities are of prong type,  and
\item the foliation is equipped with a finite  positive measure on compact arcs on $S\setminus P$ transverse to the foliation, that is invariant under transverse homotopy.
\end{itemize}
\end{defn}

\textit{Remark.} Equivalently, a measured foliation as in the definition above can be described as follows. Consider an atlas of 
 charts $\{(U_i, \phi_i)\}$ away from $S \setminus (P \cup Z) $, where  $\phi_i$ are closed smooth $1$-forms  such that $\phi = \pm \phi_j $ on $U_i \cap U_j$. The foliation away from $P$ is locally defined to be the integral curves of  the line field associated with the kernel of the $1$-forms. Moreover, in a neighborhood of the the points of $Z$, the foliation is given by the kernel of the $1$-form $\Im(z^{k/2} dz)$ (see Figure 1 for the case $k=1$) and of the points of $P$, that the kernel of $\Im(z^{-1}dz)$ or $\Re(z^{-1}dz)$ (see Definition \ref{ctr}). The line element $\lvert \phi_i\rvert$ provides a well-defined transverse measure, that is invariant under transverse homotopy because the forms are closed.\\ 
In this description, the horizontal measured foliation induced by a meromorphic quadratic differential $q$ (\textit{cf.} Definition \ref{hf})  is obtained by taking the $1$-forms to be locally $\pm \Im(\sqrt q)$. \\

The main theorem of the paper can then be interpreted as a Hodge theorem for such smooth objects, namely, that any measured foliation with centers as above can be realized (up to equivalence) as one induced by a meromorphic quadratic differential.\\

In analogy with the space $\mathcal{MF}$ on a closed surface (\textit{cf.} Definition \ref{mf}) we have:

\begin{defn} Let $\mathcal{MF}_2(S,P)$ denote the space of measured foliations with centers, up to the equivalence relations of isotopy and Whitehead moves. 

(We shall drop the reference to the surface $S$ and set of centers $P$ when it is obvious from the context.)
\end{defn}

Namely, the space $\mathcal{MF}_2(S,P)$ is parametrized by an adaptation of the Dehn-Thurston coordinates for measured foliations that we mentioned in Definition \ref{mf}.We refer to Proposition 3.9 of \cite{ALPS} for details (see also \S3 of that paper), and provide a sketch of the argument.

\begin{prop}\label{spc} Let $S$ be a surface of genus $q \geq 2$ and  $P$ be a set of $n$ marked points, as above. The space  $\mathcal{MF}_2$ of measured foliations on $S$ with centers at $P$ is homeomorphic to $\mathbb{R}^{6g-6 + 3n}$.
\end{prop}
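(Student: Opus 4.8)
The plan is to count the parameters needed to determine a measured foliation with centers and show they biject continuously with $\mathbb{R}^{6g-6+3n}$. The strategy is to reduce to the classical Dehn-Thurston coordinates on a closed surface, then account for the extra data contributed by the $n$ centers. The guiding principle is that a center is (up to isotopy, by the Remark following Definition~\ref{ctr}) topologically a radial or closed center, so the only measured data genuinely attached to each marked point is a single nonnegative real number --- the transverse measure of a small loop linking it --- together with the gluing/twisting data where the center meets the rest of the surface.

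**First I would** fix a pants decomposition of $S$ adapted to $P$: choose the $n$ marked points to lie in cusped neighborhoods, and cut $S\setminus P$ along $3g-3+n$ simple closed curves into pairs of pants, where each of the $n$ cusps is surrounded by one boundary curve of the decomposition. As in the classical case (Definition~\ref{mf}), each of these $3g-3+n$ curves $\gamma_i$ carries two real coordinates: the transverse measure $m_i = \tau(\gamma_i) \geq 0$ and the twisting number $t_i \in \mathbb{R}$ (with the usual convention $t_i$ unconstrained in sign when $m_i>0$, and $t_i \geq 0$ when $m_i=0$, so that the pair $(m_i,t_i)$ ranges over a space homeomorphic to $\mathbb{R}^2$). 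This gives $2(3g-3+n) = 6g-6+2n$ coordinates. The remaining $n$ coordinates come from the centers themselves: for each $p_j$ I would record the transverse measure $\tau(\ell_j)$ of a loop $\ell_j$ linking $p_j$, which is exactly the datum singled out in the Introduction as determining $\Re(\text{Res}(p_j))$. Since a center may be either closed ($\tau = 0$, foliated by circles) or radial/spiralling ($\tau \geq 0$), this contributes one nonnegative real parameter per marked point, and it is independent of the surrounding twisting data --- adding up to $6g-6+2n + n = 6g-6+3n$ real parameters.

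**Next I would** verify that this assignment is a well-defined homeomorphism, which breaks into the standard two directions. For injectivity and surjectivity onto the coordinate space, I would invoke the classical Dehn-Thurston theory (cited via \cite{DehnThur}, \cite{PenHar}) for the bulk of the surface, and handle the boundary-to-center matching by the reconstruction procedure: given coordinates, one builds the foliation on each pair of pants realizing the prescribed boundary measures, glues with the prescribed twists, and caps off each cuspidal boundary with a center whose linking measure matches $\tau(\ell_j)$. The key compatibility to check is that the transverse measure of the boundary curve surrounding $p_j$ and the linking measure $\tau(\ell_j)$ can be prescribed independently --- this is where the spiralling flexibility (the Remark after Definition~\ref{ctr}) is used, since it shows all spiralling behaviours are isotopic and so do not enter the coordinates. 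Continuity of the coordinates in $F$ and of the inverse reconstruction follows as in the closed case from the continuity of transverse measures under the natural topology on $\mathcal{MF}_2$.

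**The main obstacle** I anticipate is the careful treatment of the degenerate locus where some $m_i = 0$ or some linking measure $\tau(\ell_j) = 0$, precisely because the twisting coordinate and the closed-versus-radial dichotomy of a center interact there. When $\tau(\ell_j)=0$ the center is forced to be a closed center (foliated by circles), and one must check that the chart transition across this locus is a homeomorphism rather than merely a bijection --- the same subtlety that makes the classical $(m_i,t_i)\mapsto \mathbb{R}^2$ identification require the half-space collapsing when $m_i=0$. I would address this by appealing directly to Proposition~3.9 of \cite{ALPS}, where exactly this adaptation of Dehn-Thurston coordinates is carried out, and limit my own contribution to explaining how the $n$ linking measures fit in as the additional $n$ coordinates; the heavy lifting of the homeomorphism property is then imported rather than reproved.
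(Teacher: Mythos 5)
Your parameter count and overall strategy match the paper's: $2(3g-3+n)$ Dehn--Thurston coordinates for the interior pants curves, one additional measure per marked point, the spiralling isotopy (Remark after Definition~\ref{ctr}) absorbing any twist about a center, and the homeomorphism property ultimately delegated to Proposition~3.9 of \cite{ALPS} --- exactly the citation the paper itself leans on. The one structural difference is that you parametrize the surface-with-boundary directly and cap off with centers, whereas the paper excises the centers, isotopes the foliation to be orthogonal (resp.\ parallel) to the boundary, and \emph{doubles} across the boundary by an orientation-reversing reflection; symmetric foliations on the closed double are then handled by the classical closed-surface Dehn--Thurston theory, with paired interior curves contributing two parameters each and each fixed (doubled-boundary) curve contributing one. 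Your route is shorter, but the doubling is not mere packaging: it is the device that makes the boundary-adapted coordinates visible without asserting them ab initio, and in particular it exhibits what happens at the degenerate locus, which is where your write-up is thin.

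Concretely: as literally written, your coordinate space is $\mathbb{R}^{6g-6+2n}\times[0,\infty)^n$ --- you say each marked point ``contributes one nonnegative real parameter,'' namely $\tau(\ell_j)\geq 0$ --- and that space is a manifold with corners, \emph{not} homeomorphic to $\mathbb{R}^{6g-6+3n}$. For the proposition as stated, the closed-center locus $\tau(\ell_j)=0$ must be an \emph{interior} point of a full line of parameters, not the endpoint of a half-line. The doubling makes the missing half-line apparent: at the fixed (doubled-boundary) curve, the folded Dehn--Thurston pair restricted to reflection-symmetric foliations is $\{(m,0):m>0\}\cup\{(0,t):t\geq 0\}$, i.e.\ the half-line of transverse measures glued at the origin to the half-line of weights of the annulus of closed leaves parallel to the curve, and these two half-lines together constitute the $\mathbb{R}$-factor attached to each marked point. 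In your capping description this second half-line --- the boundary-parallel cylinder weight that appears precisely when the linking measure vanishes --- is invisible, because you record only the single number $\tau(\ell_j)$. Since you import Proposition~3.9 of \cite{ALPS} wholesale, the argument can be closed that way, but the sentence asserting ``one nonnegative real parameter per marked point'' should be repaired: either add the annular weight as the coordinate on the closed-center side of the locus, or state explicitly that the boundary parameter in the boundary-adapted coordinates of \cite{ALPS} is a full line of this glued form.
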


\begin{figure}
  \centering
  \includegraphics[scale=0.5]{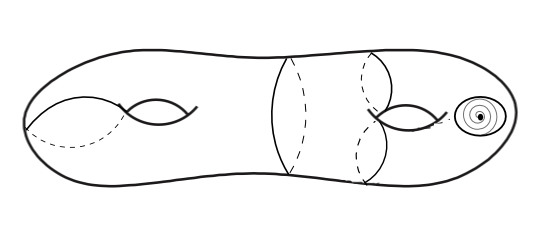}\\
 \caption{The space of measured foliations with a single center on a genus $2$ surface has dimension 9: each interior pants curve contributes two real parameters, and the closed curve around the center contributes one (Proposition \ref{spc})}
\end{figure}

\begin{proof}[Sketch of the proof] 

Given a measured foliation $F$ in $\mathcal{MF}_2$, we can excise the centers from $S$ and consider the measured foliation on the surface with boundary. It suffices to parameterize the space of such foliations, as $F$ is recovered (up to isotopy) by re-attaching a \textit{radial center} in the case the boundary has a positive transverse measure, and by a \textit{closed center} when the transverse measure vanishes. (See Definition \ref{ctr} for these notions.)  
In the former case, by an isotopy relative to the boundary, one can arrange that the leaves of $F$ on the surface-with-boundary are  orthogonal to the boundary component, and in the latter case, parallel to it. 

In either case, doubling the surface across the boundaries by an orientation-reversing reflection results in a doubled surface with a measured foliation.
Such measured foliations  on the doubled surface can then be parametrized by coordinates provided by the transverse measures and twisting numbers for a system of pants curves that  that have an  involutive symmetry, and include the curves obtained from doubling the boundary components  (\textit{cf.} \S3.4 of \cite{ALPS}).

The two-fold symmetry implies that the parameters for pairs of ``interior" pants curves (that is, other than ones obtained from the boundary curves) will agree. 

There are a total of $3g-3 + n$ interior pants curves (on the excised surface before doubling) and $n$ boundary curves.  Each boundary curve contributes one parameter, namely the transverse measure, to the foliation on the doubled surface.  Each interior pants curve contributes two (the length and twist parameters). Hence the total number of parameters determining $F$ is $6g-6 + 3n$. (See Figure 4 for an example.) Note that the boundary curve having transverse measure zero corresponds to the case of a closed center.

Conversely, by reversing the steps, any such collection of $6g-6 + 3n$ parameters determines a symmetric foliation on the doubled surface, and hence a unique foliation $F$ in $\mathcal{MF}_2$. Thus the space $\mathcal{MF}_2$ is homeomorphic to $\mathbb{R}^{6g-6+3n}$. \end{proof}

\textit{Remark.} Measured foliations with $1$-prong singularities  (associated with poles of order one) arise in the context of classical Teichm\"{u}ller theory for a surface with punctures. For these, the usual parametrization  for closed surfaces extends by associating two real parameters for each such singularity. Namely, the corresponding space $\mathcal{MF}_1$ of foliations is of dimension $6g-6+2n$ (see Expos\'{e} XI of \cite{FLP}). For $\mathcal{MF}_2$ as in the proposition above, the additional real parameter is the transverse measure of a loop around each center.

\section{Model maps and cylinder ends}

As discussed in \S2, a meromorphic quadratic differential induces a measured foliation and  a harmonic collapsing map to its metrized leaf-space. Recall that a  quadratic differential with a double order pole has a neighborhood that is a ``center" (see Definition \ref{ctr}). The leaf space of the foliation on a radial center is a circle of circumference given by the transverse measure around the pole, and in the case of a closed center,  a copy of $\mathbb{R}_{\geq 0}$. (See Figure 5.) 
In this section we describe a model family of harmonic maps from a neighborhood of a pole  to $S^1$ (or $\mathbb{R}_{\geq 0}$), parametrized by the complex residue at the pole.
 Up to bounded distance, these form all the possibilities for the restriction of the harmonic collapsing map to that neighborhood.  \\

\textit{Notation.} Throughout this paper,  $S^1_R$ shall denote a circle of circumference $R$, and $\mathcal{C}_R$ the half-infinite Euclidean cylinder of circumference $R$:
\begin{equation*}
\mathcal{C}_{R} = \{ (x, \theta) \vert x\in [0,\infty), \theta \in S^1_{R}\}.
\end{equation*}
A finite subcylinder $(0\leq x\leq L)$ of $\mathcal{C}_{R}$ is denoted by $\mathcal{C}_R(L)$.\\

\subsection*{A basic example}
The differential  $\frac{1}{z^2}dz^2$ on the punctured disk  $\mathbb{D}^\ast$ yields a half-infinite Euclidean cylinder  $\mathcal{C}_{2\pi}$ in the induced singular-flat metric.  In fact, in the coordinates $(x,\theta)$ on this flat cylinder, the differential is $d\omega^2$ for the complex coordinate $\omega = x+i\theta$.
The induced foliation then comprises the  ``horizontal rays"  ($\theta = \textit{const.}$) along the length of the cylinder, and the collapsing map is  
\begin{equation*}
h:\mathcal{C}_{2\pi} \to S^1_{2\pi} \text{   where  } h(x,\theta) = \theta.
\end{equation*}
that is clearly harmonic. \\

Other model maps are then obtained by introducing ``twists" to this basic example above,  together with scaling the circumference.  This amounts to multiplying the quadratic differential $dw^2$ on $\mathcal{C}_{2\pi}$ by a non-zero complex number (\textit{cf.} Lemma \ref{resmod}). The entire family of model maps also includes the  ``limiting" case of a closed center, which is obtained as the ``twists" tend to infinity, or equivalently, the angle of the leaves with the longitudinal direction tends to $\pm \pi/2$. 

\subsection{Definitions}\label{mmap} For $R\in \mathbb{R}_+$ and $\alpha \in (-\pi/2, \pi/2)$, let $\mathcal{C}_R$ be a half-infinite Euclidean cylinder of radius $R$ as above, and define $m_{R,\alpha}:\mathcal{C}_R\to S^1_{R\cos\alpha}$ to be the map
\begin{equation}\label{lift}
m_{R,\alpha}(x, \theta) = \theta\cos\alpha + x\sin\alpha 
\end{equation}
where the right-hand side is considered modulo $R\cos\alpha$. This map is harmonic: in local Euclidean coordinates on the cylinder, it is a linear function and hence its Laplacian vanishes.

Here the angle $\alpha$ shall be referred to as the \text{foliation angle}: this denotes the angle from the  longitudinal direction,  of the leaves of the collapsing foliation for the model map. 

For $\alpha = \pm \pi/2$, we define $m_{R,\alpha}:\mathcal{C}_R\to \mathbb{R}_{\geq 0}$ by $(x,\theta) \mapsto x$.  This is the case corresponding to when the collapsing foliation ``degenerates" to a foliation of $\mathcal{C}_R$ by closed circles.

\begin{figure}
  \centering
  \includegraphics[scale=0.7]{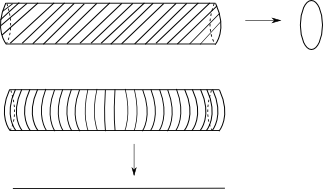}\\
 \caption{The collapsing map for the induced foliation in a neighborhood of an order two pole yields a harmonic map to $S^1$ or $\mathbb{R}$.}
\end{figure}

Together, we obtain 

\begin{defn}[Model maps] The expression (\ref{lift}) defines the family 
\begin{equation*}
\mathsf{M} = \{ m_{R,\alpha} \vert R \in \mathbb{R}_+, \alpha \in S^1_{\pi}\}
\end{equation*}
of harmonic maps from $\mathcal{C}_R$ to a target that is either $S^1$ (of circumference $R\cos\alpha$, when $\cos\alpha \neq 0$) or $\mathbb{R}$ (when $\cos\alpha = 0$).

Note that the interval $[-\pi/2, \pi/2]$ that is the range of values for $\alpha$  has been identified with a circle $S^1_{\pi}$, where $\pm \pi/2$ defines a single point.

As in the basic example at the beginning of the section, these model maps $m_{R, \alpha}$ are the collapsing maps for a quadratic differential on $\mathbb{D}^\ast $ of the standard form (\ref{ord2}), and are parameterized by its complex residue $a$. (For the relation between the parameters $R, \alpha$ with $a$ see Lemma \ref{resmod}.) 

\end{defn}

\begin{defn}[Model end] \label{modelend} For a choice of parameters $R \in \mathbb{R}_+$ and $\alpha \in S^1_{\pi}$, we shall refer to the half-infinite cylinder $\mathcal{C}_R$ with the foliation at angle $\alpha$ described above, as the \textit{model cylindrical end} determined by those parameters.

Since the metric on the leaf-space of a measured foliation is determined by transverse measures, the transverse measure of an arc on a model cylindrical end  is equal to the length of the image segment under the collapsing map (\ref{lift}) to the leaf-space.  In particular, the transverse measure of a meridional circle is $R\cos\alpha$, and the transverse measure of a longitudinal segment of length $L$  (such as across a truncated cylinder $\mathcal{C}_R(L)$, is $L\sin \alpha$.  
(The latter quantity is considered modulo $R\cos\alpha$, which is the circumference of the image circle, but is exactly $L\sin \alpha$ when we consider the lift to the universal cover in the next subsection.) 

\end{defn}

\subsection{Lifts to the universal cover}

We fix the parameters $R, \alpha$ for the rest of this section. 

Let $\mathbb{H} = \{x,\theta \vert x\in \mathbb{R}_+, \theta \in \mathbb{R}\}$ be the universal cover of the interior of $\mathcal{C}_R$. 

A model map $m \in \mathsf{M}$ lifts to a map 
\begin{equation*}
\widetilde{m}: \mathbb{H} \to \mathbb{R}
\end{equation*}
which is equivariant with respect to the $\mathbb{Z}$-action that acts 
\begin{itemize}
\item[(a)] on the target by translation by $R\cos\alpha$ when $\alpha \neq \pi/2$, and trivially when $\alpha  = \pm \pi/2$,  and  
\item[(b)] on the domain by a translation in the $\theta$-coordinate by $R$.
\end{itemize}

Any quadratic differential on $\mathbb{D}^\ast$ with a pole of order two at the puncture also has an induced foliation that lifts to the universal cover $\mathbb{H}$. (The universal covering map $\pi:\mathbb{H} \to \mathbb{D}^\ast$ is the quotient by the translation $z\mapsto z+R$.) 
The following lemma asserts that the corresponding collapsing map is bounded distance from \textit{some} model map in $\mathsf{M}$. \label{rerranged Lemmas, added some sentences}

\begin{lem}\label{resmod}  Let $q$ be a meromorphic quadratic differential on $\mathbb{D}^\ast$ with an order two pole of residue $a\in \mathbb{C}^\ast$. Then the collapsing map of the induced foliation on $\mathbb{H}$  is bounded distance from the lift of the model map $m_{R,\alpha}$ where $R = 2\pi \lvert a\rvert$ and $\alpha = \text{Arg}(a)$.
\end{lem}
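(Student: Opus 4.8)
The plan is to pass to the natural coordinate of $q$ on the universal cover, write the collapsing map explicitly in that coordinate, and then re-express it in the cylinder coordinates $(x,\theta)$ to compare with formula \eqref{lift}. First I would normalize, writing $q = (a^2/z^2)\,dz^2$ as in \eqref{ord2} (possible after \S7.2 of \cite{Streb}). On the universal cover $\H$ the logarithm $\zeta = \log z$ is single-valued, so $w := a\zeta = a\log z$ is a well-defined holomorphic coordinate with $q = dw^2$. As recalled in \S2.3, in a natural coordinate the collapsing map to the metrized leaf-space is $w \mapsto \Im w$; hence on $\H$ the collapsing map of the induced foliation is the function $\zeta \mapsto \Im(a\zeta)$, up to an additive constant and the choice of orientation of the target $\R$.

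Next I would record how the flat-cylinder coordinate relates to $\zeta$. The singular-flat metric is $\lvert q\rvert = \lvert a\rvert^2\,\lvert d\zeta\rvert^2$, so (\S2.2) the sink neighborhood is the cylinder $\mathcal{C}_R$ with $R = 2\pi\lvert a\rvert$, and the covering coordinate $(x,\theta)$ on $\H$ is obtained from $\zeta$ by the orientation-normalized rescaling $x + i\theta = -\lvert a\rvert\,\zeta + c$: the factor $\lvert a\rvert$ renders $\omega = x+i\theta$ unit speed since $\lvert d\omega\rvert^2 = \lvert q\rvert$, the sign orients $x$ so that $x\to+\infty$ at the pole, and $c$ records the choice of basepoint and branch. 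Substituting $\zeta = (c - x - i\theta)/\lvert a\rvert$ into $\Im(a\zeta)$ and writing $a = \lvert a\rvert e^{i\alpha}$ with $\alpha = \mathrm{Arg}(a)$ gives
\[
\Im(a\zeta) = -\bigl(\theta\cos\alpha + x\sin\alpha\bigr) + \mathrm{const},
\]
which is exactly $-\widetilde{m}_{R,\alpha}(x,\theta)$ plus a constant by \eqref{lift}. Absorbing the overall sign into the orientation convention for the target $\R$ (equivalently, into the $S^1_\pi$-identification that makes the handedness of $\alpha$ a choice), the collapsing map and $\widetilde{m}_{R,\alpha}$ differ by a constant, and hence are at bounded distance, as claimed.

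I would close with two consistency checks. The deck transformation $\theta \mapsto \theta + R$ shifts the value of $\Im(a\zeta)$ by $\lvert\Im(2\pi i a)\rvert = R\cos\alpha = 2\pi\,\Re a$, matching both the $\Z$-action on the target described above and the transverse-measure computation \eqref{trans}. The degenerate case $\Re a = 0$, i.e.\ $\alpha = \pm\pi/2$, gives collapsing map $\pm x + \mathrm{const}$ into $\R_{\geq 0}$, which is precisely the closed-center model map $(x,\theta)\mapsto x$.

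The routine part is the substitution; \textbf{the only delicate point is the bookkeeping of orientation and sign conventions} needed to identify the foliation angle with $\mathrm{Arg}(a)$ and to fix the handedness — that is, pinning down the rotation relating the natural coordinate $w$ and the cylinder coordinate $x+i\theta$, and verifying that the induced shift under the deck group agrees with the equivariance built into $\mathsf{M}$. Once these are fixed consistently with Definition \ref{defn:res} and the identification $\alpha \in S^1_\pi$, the bounded-distance conclusion is immediate, since the two maps differ by a constant.
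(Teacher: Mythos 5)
Your argument is correct, and at its core it establishes the same identity as the paper's proof --- that for the normal form \eqref{ord2} the collapsing map coincides exactly with the model map --- but it routes the general case differently. The paper keeps the full Laurent expansion \eqref{ord2gen} and bounds the perturbation directly: the terms beyond $a^2/z^2$ alter transverse measures (hence distances in the leaf space) by $O\bigl(\int_0^r z^{-1/2}\,dz\bigr) = O(1)$, equivalently by the exponentially decaying term $c_1 e^{-\omega}$ in the cylinder coordinate; the standard-form case itself is dispatched by checking that the spiralling rays $r(t) = (t\cos\alpha,\, t\sin\alpha)$ are horizontal trajectories. You instead invoke the normal-form theorem (\S 7.2 of \cite{Streb}, which the paper itself cites as available) to reduce at once to \eqref{ord2}, and then make the standard-form computation fully explicit in the coordinate $w = a\log z$, arriving at $\Im(a\zeta) = -(\theta\cos\alpha + x\sin\alpha) + \mathrm{const}$; your sign bookkeeping is legitimate, since the leaf space carries no preferred orientation or basepoint, so post-composition with an isometry of $\mathbb{R}$ is allowed, and your consistency checks are right (the deck shift $2\pi\,\Re a = R\cos\alpha$ matches \eqref{trans}, and the $\alpha = \pm\pi/2$ case matches the separate definition of $m_{R,\pm\pi/2}$). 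Two small caveats if you write this up: the normalizing coordinate may only exist on a smaller punctured neighborhood of the pole, so the comparison is, strictly, a statement about the end --- which is also how the paper's own estimate, valid for $\lvert z\rvert \leq r < 1$, should be read; and your reduction makes the lemma rest on Strebel's normalization, whose proof encodes essentially the same perturbation estimate the paper carries out by hand. The trade-off: the paper's route is self-contained and quantifies the error term, while yours buys exact equality up to a target isometry, making the bounded-distance conclusion immediate.
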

\begin{proof}
For a pole of the form (\ref{ord2})  the collapsing map is identical to $m_{R,\alpha}$, using the fact that the ``spiralling" ray
\begin{center}
$r(t) = (t\cos \alpha,  t\sin \alpha)$ 
\end{center}
is a horizontal trajectory on the cylinder $\mathcal{C}_R$ with the usual $(x,\theta)$-coordinates, as one can check by  a straightforward calculation. 

 In general, the remaining terms in the Laurent expansion (\ref{ord2gen}) contribute to a bounded perturbation of the collapsing map, since by (\ref{meas})  the additional transverse measure is bounded by 
\begin{equation*}
\left\vert \displaystyle\int\limits_0^r \left(\frac{c_1}{z} + c_0 + f(z)\right)^{1/2} dz \right\vert   =  O\left(\displaystyle\int\limits_0^r z^{-1/2} dz\right) = O(1)
\end{equation*}
for any $0<r<1$, where $f$ is a  holomorphic function on $\mathbb{D}$ that vanishes at the origin.

This transverse measure determines distances in the leaf-space that is the image of the collapsing map. Hence the above observation implies that up to a bounded error, the image is determined by the leading order  $(a^2/z^2)$ term in (\ref{ord2gen}), which as we previously noted, is identical to $m_{R,\alpha}$.

(Alternatively, note that with respect to the coordinate $\omega=x+i\theta$ on the half-infinite flat cylinder, the  expression (\ref{ord2gen}) is $(a^2 + c_1e^{-\omega})d\omega^2$: the exponentially decaying term contributes a bounded amount to the associated collapsing map.)
\end{proof}

The following Lemma now  asserts that the model map in Lemma \ref{resmod} at a bounded distance from the collapsing map, is uniquely determined, if one also knows the parameter $R$. 

\begin{lem}\label{dist} Two model maps $m_1,m_2 \in \mathsf{M}$ lift to maps
\begin{equation*}
\widetilde{m_1},\widetilde{m_2}:\mathbb{H} \to \mathbb{R}
\end{equation*}
that are a bounded distance apart if and only if their foliation angles $\alpha_1$ and $\alpha_2$ are equal.
\end{lem}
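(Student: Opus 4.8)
The plan is to argue directly from the explicit formula (\ref{lift}), using that both maps are defined on the \emph{same} cylinder $\mathcal{C}_R$ and therefore share the domain $\mathbb{H}=\{(x,\theta): x\in\mathbb{R}_+,\ \theta\in\mathbb{R}\}$; only the angles $\alpha_1,\alpha_2$ may differ. On $\mathbb{H}$ the two lifts are the affine functions $\widetilde{m_i}(x,\theta)=\theta\cos\alpha_i+x\sin\alpha_i$ (with the convention $\widetilde{m_i}=x$ in the degenerate case $\alpha_i=\pm\pi/2$), each well defined only up to an additive constant coming from the choice of lift; such a constant does not affect whether a difference is bounded. The forward implication is then immediate: if $\alpha_1=\alpha_2$ the two lifts are given by the same formula, so they differ by a constant and lie at bounded (indeed zero) distance.

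For the converse I would compute
\[
\widetilde{m_1}(x,\theta)-\widetilde{m_2}(x,\theta)=\theta\,(\cos\alpha_1-\cos\alpha_2)+x\,(\sin\alpha_1-\sin\alpha_2),
\]
an affine function on the half-plane $\mathbb{H}$. Since $\theta$ ranges over all of $\mathbb{R}$ while $x$ ranges over $(0,\infty)$, such a function is bounded precisely when both coefficients vanish: fixing $x$ and letting $\theta\to\pm\infty$ forces $\cos\alpha_1=\cos\alpha_2$, after which fixing $\theta$ and letting $x\to+\infty$ forces $\sin\alpha_1=\sin\alpha_2$. Together these give $\alpha_1=\alpha_2$ as points of $S^1_\pi$. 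The boundary case $\alpha=\pm\pi/2$ is handled by the same two limits, now with one map equal to $x$.

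The step I expect to carry the real content is disentangling the two coefficients, which record independent asymptotic data. The $\theta$-coefficient $\cos\alpha_i$ is, up to the factor $R$, the translation length $R\cos\alpha_i$ of the $\mathbb{Z}$-action on the target, i.e. the transverse measure of a meridian around the pole; the $x$-coefficient $\sin\alpha_i$ measures the growth of the map as one travels into the cylindrical end toward the pole. The equivariance datum alone determines only $\cos\alpha_i$, hence $\lvert\alpha_i\rvert$, and so cannot distinguish $\alpha$ from $-\alpha$ --- the ``handedness'' of the spiralling discussed in Remark (i) of the introduction. The genuinely non-trivial point is therefore that matching transverse measures around the pole is \emph{not} enough to match the two model maps: one must also match the longitudinal stretch recorded by the $x\to+\infty$ behaviour before concluding $\alpha_1=\alpha_2$.
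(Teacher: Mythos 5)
Your proof is correct and takes essentially the same approach as the paper's: both argue directly from the explicit affine formula (\ref{lift}), deducing unboundedness of the difference from linear growth along coordinate lines (the paper uses only the $x\to\infty$ limit on $\theta=\textit{const.}$ lines, which already suffices since $\sin$ is injective on $[-\pi/2,\pi/2]$, plus a separate remark for $\alpha=\pm\pi/2$ that your convention $\widetilde{m}=x$ handles equivalently). One small caveat: your framing restricts to a common $R$, whereas the lemma allows $R_1\neq R_2$; since the lifted formula $\theta\cos\alpha+x\sin\alpha$ is independent of $R$, your computation covers that case verbatim --- exactly the point the paper makes explicitly in its `if' direction.
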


\begin{proof}
The `if' direction is straightforward.  The model maps $\widetilde{m_i}$ are determined by the two parameters of the radius $R_i$ and the angle $\alpha_i$. If the foliation angles $\alpha_1$ and $\alpha_2$ and the radii $R_1$ and $R_2$ are equal, then by the expression (\ref{lift}) the two lifts are in fact identical, up to a translation of base point. Further, if the angles $\alpha_1$ and $\alpha_2$ are identical, but the maps differ in their parameter $R$, then the lifts are still the same (and it is only the translation action of $\mathbb{Z}$ on $\mathbb{H}$ and $\mathbb{R}$ that differs).

For the other direction, consider the case when neither foliation angle equals $\pm \pi/2$ (which is a single point in the parameter space for angles, $S^1_\pi$, that corresponds to a closed center). Then the expression (\ref{lift}) also represents the lift of the model map to $\mathbb{R}$, where the latter is thought of as the universal cover of $S^1_{R\cos\alpha}$.  From (\ref{lift}), we see that two such maps  with different values of $\alpha$, say $\alpha_1$ and $\alpha_2$, are an unbounded distance apart: consider, for example, the restriction to a $\theta= \textit{const.}$ vertical line. Then the distance between the images  equals $x\lvert \sin \alpha_1-\sin \alpha_2\rvert \to \infty$ as $x\to \infty$. 

If one of the foliation angles is $\pm \pi/2$, then the image of an entire horizontal line is bounded (it maps to a point in the target $\mathbb{R}$) which, again by (\ref{lift}),  is not the case for $\alpha \neq \pm \pi/2$. 
\end{proof}

We shall use the previous two lemmas in the \S5.2.4, the endgame of the proof, where we need to verify that the harmonic map we produce, at a bounded distance from a given model map near the puncture, indeed has the required residue.

\subsection{Least-energy property}

We begin by noting the energy of a model map, that we will need in the next section.

\begin{lem}\label{ener} The restriction of the model map $m_{R,\alpha}:\mathcal{C}_R \to S^1_{R\cos\alpha}$ to a sub-cylinder $\mathcal{C} = \mathcal{C}_R(L)$ has energy equal to $\frac{1}{2}RL = \frac{1}{2} R^2 \cdot \text{Mod}(\mathcal{C})$. 
\end{lem}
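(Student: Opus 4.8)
The plan is to compute the energy of the model map $m_{R,\alpha}$ directly from its explicit formula, since in the flat coordinates the map is affine and its energy density is constant. First I would set up the computation using the definition \eqref{endef} of the energy functional, but it is cleaner to work with the real energy density, namely half the squared operator norm (Hilbert--Schmidt norm) of the differential $dm_{R,\alpha}$ with respect to the flat metric on $\mathcal{C}_R$. From the formula \eqref{lift}, $m_{R,\alpha}(x,\theta) = \theta\cos\alpha + x\sin\alpha$, the partial derivatives are the constants $m_x = \sin\alpha$ and $m_\theta = \cos\alpha$. Hence the energy density is
\[
e(m_{R,\alpha}) = \tfrac{1}{2}\left( m_x^2 + m_\theta^2 \right) = \tfrac{1}{2}\left( \sin^2\alpha + \cos^2\alpha \right) = \tfrac{1}{2},
\]
a constant independent of the point and of $\alpha$. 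This reflects the elementary fact that an affine map into $\mathbb{R}$ (or $S^1$) with these coefficients has total stretch exactly $1$.

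Next I would integrate this constant density over the sub-cylinder $\mathcal{C} = \mathcal{C}_R(L) = \{(x,\theta) : 0\le x\le L,\ \theta \in S^1_R\}$. Since the flat area of $\mathcal{C}_R(L)$ is simply $R\cdot L$ (circumference times length), the total energy is
\[
\mathcal{E}\bigl( m_{R,\alpha}|_{\mathcal{C}} \bigr) = \int_{\mathcal{C}} e(m_{R,\alpha})\, dA = \tfrac{1}{2}\cdot \mathrm{Area}(\mathcal{C}) = \tfrac{1}{2} R L,
\]
which is the first claimed expression. I would note in passing that this is manifestly independent of the foliation angle $\alpha$, as the density was.

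Finally I would rewrite $\tfrac12 RL$ in terms of the modulus to obtain the second expression. The half-infinite cylinder $\mathcal{C}_R$ of circumference $R$, truncated to longitudinal length $L$, is conformally the rectangle of width $R$ and height $L$ with the two vertical sides identified; its modulus (height over circumference) is $\mathrm{Mod}(\mathcal{C}) = L/R$. Substituting $L = R\cdot \mathrm{Mod}(\mathcal{C})$ into $\tfrac12 RL$ gives $\tfrac12 R^2\, \mathrm{Mod}(\mathcal{C})$, as required.

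I do not anticipate a genuine obstacle here: the result is essentially a one-line calculation once one observes the energy density is the constant $\tfrac12$. The only points that merit a word of care are (i) fixing the normalization of the energy so that it agrees with \eqref{endef} — in the complex notation $|h_z|^2 + |h_{\bar z}|^2 = \tfrac12(m_x^2 + m_\theta^2)$ for a real-valued $h = m$, which is consistent with the energy of the basic collapsing map equalling half the singular-flat area as noted in \S2.3 — and (ii) the convention for $\mathrm{Mod}$ of the cylinder, which I would state explicitly as $L/R$ to make the identity unambiguous.
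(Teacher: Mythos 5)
Your proposal is correct and follows essentially the same route as the paper: both compute the constant partial derivatives $m_x = \sin\alpha$, $m_\theta = \cos\alpha$ from \eqref{lift} in flat $(x,\theta)$-coordinates, observe that the energy density is constant, and conclude that the energy is half the Euclidean area $\frac{1}{2}RL = \frac{1}{2}R^2\,\mathrm{Mod}(\mathcal{C})$. Your added remarks on the normalization of \eqref{endef} and the convention $\mathrm{Mod}(\mathcal{C}) = L/R$ are consistent with the paper's conventions.
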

\begin{proof}
The expression for energy (\ref{endef}) can be converted to Euclidean $(x,\theta)$-coordinates on $\mathcal{C}$ to obtain:
\begin{equation*}
\mathcal{E}(f)  = \frac{1}{2} \displaystyle\int\limits_\mathcal{C}\left( \lvert h_x\rvert^2 + \lvert h_{\theta}\rvert^2 \right)  dxd{\theta}
\end{equation*}
and then using the expression (\ref{lift}) of the map, we see that the integrand is $1$. Hence the energy equals half the Euclidean area of the cylinder, that is, $\frac{1}{2}RL$.

Note that the modulus of the cylinder $\text{Mod}(\mathcal{C}) = L/R$. \end{proof}

What we shall now prove is a crucial property of a model map, namely its restriction to subcylinders solves a certain least-energy problem for maps to an $\mathbb{R}$-tree target. \\

In what follows, let $A$ be an annulus of modulus $0<M<\infty$ that we uniformize to the flat cylinder $\mathcal{C}_R(L)$, that is, of  circumference $R$ and length $L$, such that $L/R=M$. 
As usual, we equip this cylinder with coordinates  $x,\theta$.

Let  $\widetilde{A} = [0,L] \times \mathbb{R}$ be the universal cover of $A$, and $T$ be an $\mathbb{R}$-tree with a $\mathbb{Z}$-action (that is, in fact, part of a larger surface-group action, which we shall not need). 

Consider a map
\begin{equation*}
\tilde{u}: \widetilde{A} \to T
\end{equation*}
which is $\mathbb{Z}$-equivariant, where the action on the domain is by translation by $R$ on the $\mathbb{R}$-factor. 
 Let $D = [0,L] \times [0,R] \subset \widetilde{A}$ be a fundamental domain of the action.

\begin{prop}\label{minim} Let 
\begin{center}
${u}:[0,L]\times[0,R] \to T$
\end{center}
be a map to an $\mathbb{R}$-tree, defined on a fundamental domain in the universal cover of an annulus of modulus $M := L/R$ as defined above. Suppose that there exist constants $C,\tau>0$ such that ${u}$ satisfies:
\begin{itemize}
\item[(1)]  each vertical ($\theta$-) arc in $D$ maps to a segment of length $C$ in $T$, and 
\item[(2)]  each horizontal ($x$-) arc in $D$ maps to a segment of length $\tau$ in $T$.
\end{itemize}
Then the  energy of the map  satisfies the lower bound
\begin{equation}\label{lowerb}
\mathcal{E}(u) \geq \frac{1}{2}M \cdot \left(C^2+ \frac{\tau^2}{M^2}\right).
\end{equation}

\end{prop}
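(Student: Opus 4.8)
The plan is to slice the fundamental domain $D=[0,L]\times[0,R]$ into coordinate arcs, bound the one-dimensional energy along each arc from below using the prescribed image lengths, and then reassemble by Fubini. First I would rewrite the energy in the Euclidean coordinates $(x,\theta)$ exactly as in the proof of Lemma~\ref{ener}, so that
\[
\mathcal{E}(u)=\frac12\int_0^R\!\!\int_0^L |u_x|^2\,dx\,d\theta+\frac12\int_0^L\!\!\int_0^R |u_\theta|^2\,d\theta\,dx,
\]
where $|u_x|$ and $|u_\theta|$ denote the directional energy densities (metric speeds) of $u$ along the horizontal and vertical directions. These exist almost everywhere, and the total energy density splits as their sum; since the target $T$ is an $\mathbb{R}$-tree, away from the (measure-zero) branch locus the map is locally real-valued, so these are the ordinary partial derivatives, while in general one appeals to the Korevaar--Schoen framework for maps into NPC spaces.

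Next I would estimate each slice. For almost every fixed $x$, hypothesis $(1)$ says the vertical arc $\theta\mapsto u(x,\theta)$ has image a segment of length $C$ in $T$; hence the total variation of this path in $T$ is at least $C$, that is, $\int_0^R |u_\theta|\,d\theta\ge C$. By the Cauchy--Schwarz inequality on the interval $[0,R]$,
\[
\int_0^R |u_\theta|^2\,d\theta\ \ge\ \frac1R\Big(\int_0^R |u_\theta|\,d\theta\Big)^2\ \ge\ \frac{C^2}{R}.
\]
Symmetrically, for almost every fixed $\theta$, hypothesis $(2)$ gives $\int_0^L |u_x|\,dx\ge \tau$, and therefore $\int_0^L |u_x|^2\,dx\ge \tau^2/L$.

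Finally I would integrate these two slice estimates over the complementary variable and add them:
\[
\mathcal{E}(u)\ \ge\ \frac12\int_0^L\frac{C^2}{R}\,dx+\frac12\int_0^R\frac{\tau^2}{L}\,d\theta\ =\ \frac12\Big(\frac{LC^2}{R}+\frac{R\tau^2}{L}\Big).
\]
Substituting $M=L/R$ turns the right-hand side into $\tfrac12\big(MC^2+\tau^2/M\big)=\tfrac12 M\big(C^2+\tau^2/M^2\big)$, which is exactly the claimed bound \eqref{lowerb}.

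The only genuinely delicate point is the setup in the first paragraph: making precise the directional energy densities $|u_x|$, $|u_\theta|$ and the splitting of the energy density for a map into a metric tree, together with the measurability needed to apply Fubini to the slice integrals. Once that is in hand, the estimate is just two applications of Cauchy--Schwarz, and the bound is sharp: equality forces $|u_\theta|$ to be independent of $\theta$ and $|u_x|$ independent of $x$, which pins $u$ down as the affine model map $m_{R,\alpha}$. This is precisely the sense in which the model maps solve the least-energy problem, and is what I would record in a concluding remark.
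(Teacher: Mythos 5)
Your proof is correct and follows essentially the same route as the paper: the classical length--area argument, using the hypotheses to bound the slicewise total variations $\int_0^R|u_\theta|\,d\theta\geq C$ and $\int_0^L|u_x|\,dx\geq\tau$ and then applying Cauchy--Schwarz, the only (immaterial) difference being that you apply Cauchy--Schwarz on each one-dimensional slice before integrating while the paper applies it once to the double integral. Your attention to the Korevaar--Schoen directional energy densities for the tree target is a reasonable point of care that the paper leaves implicit.
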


\begin{proof}
We adapt the classical length-area argument.

Since every vertical arc  in $D= [0,L] \times [0,R]$ maps to an arc of length $C$, we have
\begin{equation*}
\displaystyle\int\limits_0^{R}  u_\theta d\theta \geq C
\end{equation*}
Integrating over the longitudinal ($x$-) direction we obtain:
\begin{equation*}
\displaystyle\int\limits_0^{R}\int\limits_0^{L}  \lvert u_\theta\rvert dxd\theta  \geq LC.
\end{equation*}
By the Cauchy-Schwarz inequality,
\begin{equation*}
 \displaystyle\int\limits_0^{R} \int\limits_0^{L}  \lvert u_\theta \rvert^2  dx d\theta  \displaystyle\int\limits_0^{R}\int\limits_0^{L} 1^2  dxd\theta \geq L^2 C^2
\end{equation*}
which implies 
\begin{equation}\label{ut}
 \displaystyle\int\limits_0^{R} \int\limits_0^{L}  \lvert u_\theta \rvert^2  dx d\theta   \geq  \frac{L^2 C^2}{LR} = C^2M.
\end{equation}

On the other hand, for each horizontal  arc we have:
\begin{equation*}
\displaystyle\int\limits_0^{L}  u_x dx \geq \tau
\end{equation*}
and by the same calculation as above we obtain
\begin{equation}\label{ux}
 \displaystyle\int\limits_0^{R} \int\limits_0^{L}  \lvert u_x \rvert^2  dx d\theta   \geq \frac{\tau^2}{M}.
\end{equation}

Combining (\ref{ut}) and (\ref{ux}) we find:

\begin{equation*}
\mathcal{E}(u)  = \frac{1}{2}  \displaystyle\int\limits_0^{R} \int\limits_0^{L}  \left(\lvert u_x \rvert^2  +    \lvert u_\theta \rvert^2  \right)dx d\theta  \geq   \frac{1}{2}M \cdot \left(C^2+ \frac{\tau^2}{M^2}\right)
\end{equation*}
as desired.

\end{proof}

  \textit{Remark.} The restriction of a model map $m =m_{R,\alpha}:\mathcal{C}_R \to S^1_{R\cos\alpha}$ to a subcylinder $\mathcal{C} = \mathcal{C}_R(L)$ lifts to a $\mathbb{Z}$-equivariant map
  \begin{equation*}
  \tilde{m}: [0,L] \times \mathbb{R} \to \mathbb{R}
  \end{equation*}
  that satisfies (1) and (2) with $\tau = L\sin \alpha$ and $C=R\cos\alpha$, which are the transverse measures of the foliation across the subcylinder (see Definition \ref{modelend}). 
   
   A simple substitution in (\ref{lowerb}) then yields $\mathcal{E}(\tilde{m}\vert_{\tilde{C}}) \geq \frac{1}{2}R^2\cdot  \text{Mod}(\mathcal{C})$.  Lemma \ref{ener} then confirms that, in fact, equality holds, and so the lift of the model map realizes the least energy for all  $\mathbb{Z}$-equivariant maps from $\widetilde{\mathcal{C}}$ to $\mathbb{R}$ satisfying (1) and (2).

\section{Proof of Theorem \ref{main}} 

We fix throughout the Riemann surface $\Sigma$ with a single marked point that we denote by  $P$.
Note that in the case of several marked points, we need add only an additional subscript, and the entire argument holds \textit{mutatis mutandis}.

We also fix a measured foliation $F \in \mathcal{MF}_2(\Sigma, P)$ with center  $U$ around $P$. Let $T$ be an $\mathbb{R}$-tree that is the leaf-space of the lifted foliation $\widetilde{F}$ on the universal cover. Recall that this acquires a metric from the projection of the transverse measures of the foliation.
The collapsing map to the leaf space is then a map from $\widetilde{\Sigma \setminus P}$ to $T$.

The proof of Theorem \ref{main} involves showing there is a \textit{harmonic} map 
\begin{center}
$\widetilde{h}:\widetilde{\Sigma \setminus P}\to T$ 
\end{center}
whose collapsing foliation is $F$, with a Hopf differential with a prescribed double order pole at $P$. We begin with the (easier) proof of the uniqueness of such a map in \S5.1, and then show existence in \S5.2.

\subsection{Uniqueness} 

Let $q_1$ and $q_2$ be meromorphic quadratic differentials realizing the same measured foliation $F$, and with the same imaginary part of the residue at $P$ (up to sign).

These determine harmonic maps $\widetilde{h_1},\widetilde{h_2}: \widetilde{\Sigma \setminus P}\to T$. 

Recall from \S1 that the real part of the residue at the pole is determined by $F$, namely, it is the transverse measure of a loop around the pole. Our convention is that this real part is always taken to be non-negative.

Since we have assumed that $q_1$ and $q_2$ have the same imaginary parts, the (complex) residues for $q_1$ and $q_2$ at the pole $P$ are equal.

By Lemma \ref{resmod}, this residue then determines the (lift of the) model map $m_{R,\alpha}$ to which $\widetilde{h_1}$ and $\widetilde{h_2}$ are asymptotic.  (In particular, $R$ is the circumference of the half-infinite cylinder in the induced metric, that is, equals $2\pi \lvert \text{Res}(P)\rvert$.)  That is,  both $\widetilde{h_1}$ and $\widetilde{h_2}$  are bounded distance from the lift $\widetilde{m_{R,\alpha}}$. Hence the distance function between $\widetilde{h_1}$ and $\widetilde{h_2}$ is bounded. The distance function is invariant under the action of a deck transformation, hence descends to the (original) Riemann surface.  However it is well-known (this is an application of the chain-rule - see \cite{Jost2} for Riemannian targets and \cite{KorSch1} or \cite{Wolf2} for tree-targets)  that  such a distance function is subharmonic, and since a punctured Riemann surface is parabolic in the potential-theoretic sense, the distance must be constant. Moreover, the constant  is zero because of a local analysis around a prong singularity, as in the proof of Proposition 3.1 in \cite{DaDoWen} (see also section 4 of \cite{Wolf2}).

\subsection{Existence}
In this section, we show that given 

\begin{itemize}
\item  the topological data of a measured  foliation $F\in \mathcal{MF}_2$ with a center at $P$,  and
\item a complex residue $a$ at the pole $P$ that is compatible with the transverse measure of the linking loop around it, namely, having the corresponding real part, 
\end{itemize}
there exists a quadratic differential on the punctured surface $\Sigma \setminus P$ with a double order pole at $P$ of that residue (or its negative), realizing $F$. 

Recall that $T$ is the leaf-space of the lift of the foliation $\mathcal{F}$ to the universal cover.
The required quadratic differential shall be obtained as a Hopf differential of a harmonic map $\widetilde{h}: \widetilde{\Sigma \setminus P} \to T$, which in turn is obtained as a limiting of a sequence of harmonic maps $\widetilde{h_n}$ defined on (lifts) of a compact exhaustion of the punctured surface.\\

The following is an outline of the proof:

\begin{itemize}

\item \textit{Step 1.} We  start with $\Sigma \setminus P$ equipped with $F$, such that  on the neighborhood $U$ of the puncture, the foliation $F$ restricts to that of a model cylindrical end for the parameters $R$, $\alpha$. Consider a compact exhaustion $\Sigma_n$ of $\Sigma \setminus P$. For each $n$, by a holomorphic doubling across the boundary $\partial \Sigma_n$, we use the Hubbard-Masur Theorem to produce a holomorphic quadratic differential on $\Sigma_n$  that realizes the restricted foliation $F\vert_{\Sigma_n}$. Lifting to the universal covers, we obtain harmonic maps $\widetilde{h_n}:\widetilde{\Sigma_n} \to T$. What is crucial is that they are the energy-minimizing maps for their Dirichlet boundary conditions. \\

\item  \textit{Step 2.} Consider the annular region $A_n := \Sigma_n \setminus \Sigma_0$. We show that the energies of the restrictions of $\widetilde{h_n}$ to the lifts of $A_n$ have a lower bound proportional to the modulus of $A_n$. Together with their energy-minimizing property, an argument adapted from \cite{Wolf3} then provides  a uniform energy bound on the compact subsurface $\Sigma_0$.  Standard arguments then apply, and show the uniform convergence of $\widetilde{h_n}$ to a harmonic map $\widetilde{h}:\widetilde{\Sigma \setminus P} \to T$. \\

\item  \textit{Step 3.} We verify that the Hopf differential of $\tilde{h}$  yields a holomorphic quadratic differential on the quotient surface $\Sigma \setminus P$   with a double order pole at $P$ having the required residue, and realizing the desired foliation $F$.\\

\end{itemize}

We shall carry out the above steps in the following sections. 

\subsubsection{Step 1: The approximating sequence $\widetilde{h_n}$} 

Let $a\in \mathbb{C}^\ast$ be the desired residue at $P$.  Associated with this complex number are the parameters of circumference $R$ and ``foliation angle" $\alpha$  (see Lemma \ref{resmod}) and the model cylindrical end for $F$ around $P$. In particular, the parameter $R$ equals $2\pi$ times the absolute value of the residue.

Choose a coordinate disk $U$ around $P$. 
After an isotopy if necessary, we can assume the foliation $F$ restricts to the foliation on a model cylindrical end on $U$, for the parameters $R$ and $\alpha$, as in Definition \ref{modelend}. \\

\begin{figure}
  \centering
  \includegraphics[scale=0.5]{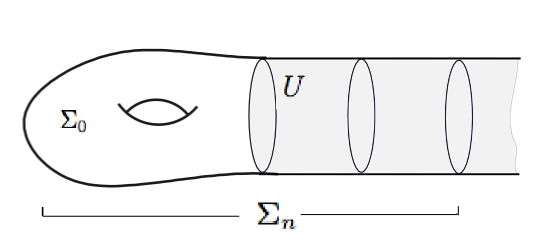}\\
 \caption{Exhaustion of the punctured surface $\Sigma \setminus P$.}
\end{figure}

Let $\Sigma_0$ be the Riemann surface with boundary  obtained by excising $U$.  

For $n\geq 1$ consider a compact exhaustion $\Sigma_n$  of $\Sigma$ obtained by excising a succession of increasingly smaller disks in $U$ around $P$. (See Figure 6.) 

As noted in the outline, we set $A_n = \Sigma_n \setminus \Sigma_0$ to be the intermediate annulus. 

Let ${F_n}$ be the restriction of the measured foliation $F$ to the compact surface-with-boundary $\Sigma_n$. By construction, on the annulus $A_n$, this foliation is the restriction of that on a model cylindrical end.

\subsection*{Doubling.} 
Let $n\geq 1$.
Take two identical copies of $\Sigma_n$, and perform a holomorphic doubling across $\partial \Sigma_n$: namely, take  two identical copies of the resulting surface-with-boundary $S_0$, and glue corresponding boundary components by an orientation-reversing homeomorphism of the circle, with exactly two fixed points (see Figure 7).
The resulting doubled surface $S_d$  has an orientation-preserving involution $\mathcal{I}:S_d\to S_d$.

Using this involution, one can push forward a  measured foliation on $S_0$ to one on $S_d$ that is symmetric under $\mathcal{I}$. 

[Note that this construction differs from the usual ``doubling-by-reflection" in the proof of Proposition \ref{spc} that fixes each boundary component and has an orientation-reversing involution. The advantage of the present construction is to ensure that the foliation on $S_0$,  that can be isotoped to be at a constant angle $\alpha$ at the boundary, extends smoothly to the doubled surface $S_d$ maintaining the twists about the boundary curves.]

We obtain a compact Riemann surface 
\begin{equation} \label{eqn:hatdecomp}
\widehat{\Sigma_n} = \Sigma_n \sqcup_\partial {\Sigma}_n
\end{equation}
with a holomorphic involution $\mathcal{I}:\widehat{\Sigma_n}  \to \widehat{\Sigma_n} $ that takes one copy of $\Sigma_n$ to the other. 

We define  the corresponding ``doubled" measured foliation $\widehat{F_n}$ on $\widehat{\Sigma_n}$. Note that by the construction of $F$ on $U$, the foliation $F_n$ is incident at a constant angle $\alpha$  at the boundary, and hence the foliation  $\widehat{F_n}$  is smooth across the doubled boundary. (See Figure 9.)

Moreover, the foliation $\widehat{F}_n$ is defined by the same parameters of twist and intersection-number on the pants curves in the interior of $\Sigma_n$, and its restriction  to the central cylinder $\hat{A}_n$ obtained by doubling $A_n$ is equivalent to the foliation on a model cylindrical end for parameters $R, \alpha$.  
 
 \begin{figure}
  \centering
  \includegraphics[scale=0.45]{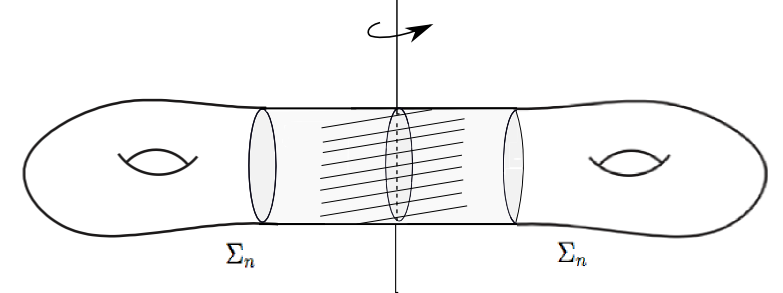}\\
 \caption{The foliation $\hat{F}_n$ is invariant under the holomorphic involution $\mathcal{I}$, indicated here as a $\pi$-rotation about a central axis.}
\end{figure}
 That is, for $n\geq 1$, the restriction of $\widehat{F_n}$ to this annulus is the foliation of the  finite subcylinder $\mathcal{C}_R(2L_n)$ of a model cylindrical end for the parameters $R$ and $\alpha$, where 
\begin{equation}\label{ln}
\text{mod}(A_n) = L_n/R.
\end{equation}
is the modulus of the subcylinder. 
 
 The transverse measure of a meridional curve around the cylinder is $L_n\cos \alpha$, and that of a longitudinal arc across it is $L_n \sin\alpha$ (see Definition \ref{modelend}). Note that $L_n\to \infty$ as $n\to \infty$.

 The involution $\mathcal{I}$ preserves the foliation $\widehat{F_n}$,  since 
 the foliation $\widehat{F_n}$ was defined on $\hat{\Sigma_n}\setminus\Sigma_n$ to agree with $\mathcal{I}(\widehat{F_n}|_{\Sigma_n})$, and $\mathcal{I}$ squares to be the identity.

Now, by the Hubbard-Masur Theorem (see \S2.1), there exists a holomorphic quadratic differential $\hat{q_n}$ that realizes the measured foliation $\widehat{F_n}$ on the doubled surface $\widehat{\Sigma}_n$.

Since the involution $\mathcal{I}$ preserves the foliation $\widehat{F_n}$ of $\hat{q_n}$, by the uniqueness part of the Hubbard-Masur theorem, the holomorphic involution $\mathcal{I}$ takes $\hat{q_n}$ to itself, namely $\mathcal{I}_\ast \hat{q_n} = \hat{q_n}$. 

Therefore taking the quotient by the holomorphic involution $\mathcal{I}$ on $\widehat{\Sigma}_n \setminus \partial\Sigma_n$, we obtain a (well-defined) holomorphic quadratic differential $q_n$ on $\Sigma_n$ (that in fact equals the restriction of $\hat{q_n}$ on a copy of $\Sigma_n \subset \widehat{\Sigma}_n$). Its induced foliation $\mathcal{F}(q_n)$  realizes the (well-defined quotient) measured foliation $F_n$ on the surface.

We now identify a foliated annular subsurface which converges to the spiralling end in the limiting construction in the final sections of the paper:

\begin{lem}\label{truncc} There is an embedded annulus $A(q_n)$ on $\Sigma_n$ such that the foliation $\mathcal{F}(q_n)$ restricted to $A(q_n)$ is identical, via a  leaf-preserving diffeomorphism, to a truncation of a model cylindrical end with parameters $R$ and $\alpha$, and that has modulus $\text{mod}(A_n) = L_n/R$.
\end{lem}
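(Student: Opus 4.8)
The plan is to find, inside the flat surface $(\widehat{\Sigma}_n,\hat q_n)$, an honest Euclidean cylinder carrying the model foliation at angle $\alpha$ and circumference $R$, and then to push it down through the involution $\mathcal I$ to obtain $A(q_n)\subset \Sigma_n$. The key structural fact I would exploit throughout is that $\mathcal I_\ast\hat q_n=\hat q_n$ and that $\mathcal I$ preserves the free homotopy class $[\hat\gamma]$ of the core meridian of $\hat A_n$, so that anything canonically attached to $\hat q_n$ and $[\hat\gamma]$ is automatically $\mathcal I$-invariant; this invariance is what makes the final descent to $\Sigma_n$ work.

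First I would reduce the construction of the cylinder to the classical theory of closed trajectories. Multiplying $\hat q_n$ by the unimodular constant $-e^{-2\mathsf{i}\alpha}$ rotates its horizontal foliation so that, on the model region $\hat A_n$, the meridians become \emph{closed} horizontal trajectories of $\tilde q_n:=-e^{-2\mathsf{i}\alpha}\hat q_n$ (for the closed-center value $\alpha=\pm\pi/2$ this constant is $1$ and no rotation occurs), while the flat metric is unchanged, $|\tilde q_n|=|\hat q_n|$. Because the model foliation on $\hat A_n$ is nonsingular — it contains no prong singularities, equivalently no zeros of $\hat q_n$ lie on it — the equivalence $\mathcal F(\hat q_n)\simeq\widehat F_n$ restricts over the interior of $\hat A_n$ to an ambient isotopy: Whitehead moves are supported along the critical graph, which is disjoint from this nonsingular product region. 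After applying that isotopy and shrinking $\hat A_n$ by an arbitrarily thin collar, $\hat A_n$ is foliated by closed horizontal trajectories of $\tilde q_n$ in the class $[\hat\gamma]$.

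Next I would invoke Strebel's structure theorem (\cite{Streb}): the closed horizontal trajectories of $\tilde q_n$ in the class $[\hat\gamma]$ sweep out a maximal ring domain $\hat C$, a zero-free Euclidean cylinder foliated by closed geodesics of a single flat length $w$. The isotoped $\hat A_n$ consists of such trajectories, so it embeds in $\hat C$ and $\text{mod}(\hat C)\ge\text{mod}(\hat A_n)=2L_n/R$. Restoring $\hat q_n$, the induced foliation on $\hat C$ is the model foliation at constant angle $\alpha$, and I would pin the circumference through transverse measures: a core curve of $\hat C$ is freely homotopic to $\hat\gamma$, so its $\mathcal F(\hat q_n)$-transverse measure equals the intersection number $i(\hat\gamma,\widehat F_n)=R\cos\alpha$ supplied by Definition \ref{modelend}; since the core meets the angle-$\alpha$ foliation with transverse measure $w\cos\alpha$, this forces $w=R$ whenever $\cos\alpha\ne 0$. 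In the degenerate closed-center case $\cos\alpha=0$ the transverse measure of the meridian vanishes and this mechanism fails; there one pins $w=R$ instead by matching the conformal modulus $2L_n/R$ of $\hat A_n$ against the flat height $2L_n$ of the closed-leaf cylinder, which is exactly the Jenkins--Strebel normalization underlying the Strebel theorem that Theorem \ref{main} specializes to.

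Finally I would truncate and descend. Choosing an $\mathcal I$-invariant sub-cylinder $\hat C'\subseteq\hat C$ of flat modulus $2L_n/R$ — equivalently of flat length $2L_n$, as $w=R$, which is possible because $\text{mod}(\hat C)\ge 2L_n/R$ and $\hat C$ is $\mathcal I$-invariant — yields a copy of $\mathcal C_R(2L_n)$ with the model foliation, symmetric about the doubling circle $\partial\Sigma_n$. Passing to the quotient by $\mathcal I$ then produces the embedded annulus $A(q_n)=\hat C'/\mathcal I\subset\Sigma_n$, on which $\mathcal F(q_n)$ is leaf-preservingly diffeomorphic to the truncated model end $\mathcal C_R(L_n)$, of modulus $L_n/R=\text{mod}(A_n)$. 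I expect the principal difficulty to be the isotopy step of the second paragraph together with the circumference-pinning of the third: one must make rigorous that the equivalence acts as a genuine isotopy over the nonsingular region, so that $\hat A_n$ really embeds in the ring domain and $\text{mod}(\hat C)\ge 2L_n/R$, while simultaneously forcing the flat circumference to equal $R$ — via intersection numbers in the generic case and via the conformal modulus in the closed-center case. Once the flat cylinder is identified, the truncation and the descent through $\mathcal I$ are routine.
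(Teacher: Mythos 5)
Your outer scaffolding (work on the double, exploit $\mathcal{I}_\ast \hat{q}_n = \hat{q}_n$, descend an $\mathcal{I}$-invariant foliated annulus) matches the paper, but the core of your argument has a genuine gap: you conflate the measure-theoretic statement the lemma actually makes with an honest flat-geometric one, and the flat-geometric version is unavailable -- indeed generally false -- at finite $n$. Hubbard--Masur gives only that $\mathcal{F}(\hat{q}_n)$ is \emph{equivalent} to $\widehat{F_n}$; after observing that Whitehead moves miss the nonsingular annulus (which is also the paper's first step), what you know is that some annulus carries a foliation that is leaf-preservingly diffeomorphic to the angle-$\alpha$ model with the correct transverse measures. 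That is a statement about a single phase of $\hat{q}_n$ and only about transverse measures; it says nothing about the trajectory structure of the rotated differential $\tilde{q}_n = -e^{-2\mathsf{i}\alpha}\hat{q}_n$. Concretely, developing the nonsingular annulus shows it sits inside a flat cylinder $\mathbb{C}/\langle w \mapsto w + c\rangle$ where the foliation class pins only $\lvert \Im c \rvert = R\cos\alpha$ (the transverse measure of the core), while $\Re c$ -- hence the flat circumference $\lvert c \rvert$ and the actual angle the spiral leaves make -- is free. So your claim that the meridians become \emph{closed} horizontal trajectories of $\tilde{q}_n$ is a non sequitur, Strebel's ring-domain theorem has nothing to bite on, and the pinning ``$w\cos\alpha = R\cos\alpha$, hence $w = R$'' presupposes exactly the geometric normalization you were trying to establish. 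The closed-center case makes the failure visible: there $\mathcal{F}(\hat{q}_n)$ genuinely contains a flat cylinder of closed leaves of height $2L_n$, but its flat circumference is dictated by the conformal structure of $\widehat{\Sigma_n}$ (a Jenkins--Strebel extremal-length quantity), not by $R$; it equals $R$ only in the $n \to \infty$ limit. Relatedly, your inequality $\text{mod}(\hat{C}) \geq \text{mod}(\hat{A}_n)$ applies modulus monotonicity to the \emph{isotoped} annulus, and conformal modulus is not an isotopy invariant.

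The repair is to not ask for flat geometry at all, which is what the paper does: since Whitehead moves are supported on the critical graph and the model annulus is a nonsingular product region, the equivalence $\mathcal{F}(\hat{q}_n) \sim \widehat{F_n}$ directly yields an annulus $A(\hat{q}_n)$ with a leaf-preserving diffeomorphism to $\mathcal{C}_R(2L_n)$ matching the two transverse-measure invariants, $R\cos\alpha$ around and $2L_n \sin\alpha$ across (Definition \ref{modelend}); the ``modulus'' in the statement refers to the model truncation and is bookkeeping, not a conformal invariant of $A(q_n)$. The $\mathcal{I}$-symmetry then descends this annulus to $A(q_n) \subset \Sigma_n$ with measures $R\cos\alpha$ and $L_n\sin\alpha$, exactly as in your final paragraph. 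Everything downstream (Lemma \ref{transv}, Corollary \ref{cor5.2}) uses only these transverse measures, so nothing stronger than measure equivalence is needed, and nothing stronger is true at finite $n$.
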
 
\begin{proof}
On the doubled surface $\hat{\Sigma}_n$, the (doubled) measured foliation $\hat{F}_n$ has a  foliated cylinder  $\mathcal{C}_R(2L_n)$ that is a truncation of a model cylindrical end, coming from doubling the restriction of $F_n$ to $A_n$.  Now, the foliation $\mathcal{F}(\hat{q}_n)$ is measure-equivalent to $\hat{F}_n$, and differs from it by at worst isotopy and Whitehead moves.
Thus, the foliation $\mathcal{F}(\hat{q}_n)$ still contains an annulus, denoted by ${A}(\hat{q}_n)$, which is measure equivalent to $\mathcal{C}_R(2L_n)$; that is, there is a leaf-preserving diffeomorphism of the foliated annulus ${A}(\hat{q}_n)$ to $\mathcal{C}_R(2L_n)$, such that the transverse measure around the annulus is $R \cos\alpha$, and any arc between the boundary components has transverse measure $2L_n \sin \alpha$; measurements that agree with those in a truncation of a model end (see Definition \ref{modelend}).
By the two-fold symmetry of the foliation under the involution $\mathcal{I}$, the foliated annulus ${A}(\hat{q}_n)$ descends to an annulus $A(q_n)$ on $\Sigma_n$ that is measure equivalent to the truncation $\mathcal{C}_R(L_n)$ of a model cylindrical end. 
\end{proof}

\begin{definition}[Truncated center]
The foliated cylinder $A(q_n)$ of Lemma~\ref{truncc} above will be referred to as the ``truncated center" of the quadratic differential $q_n$ in the Riemann surface $\Sigma_n$. 
[When the context is clear, we shall often just refer to $A(q_n)$ as the "truncated center".]	
\end{definition}

The transverse measures, across the  truncated center $A(q_n)$ or around it, are identical to those across or around $\mathcal{C}_R(L_n)$ of the model cylindrical end with parameters $R$ and $\alpha$. A crucial component of the proof of convergence is to control the position of the truncated center relative to the initial foliated annulus $A_n$ (see \S5.2.2).

\subsection*{The harmonic map $\widetilde{h_n}$.} 

 By considering the lift $\widetilde{q_n}$ of the holomorphic quadratic differential ${q_n}$ to the universal cover $\widetilde{\Sigma}_n$ and the collapsing map of its induced measured foliation, one obtains a harmonic map  $\widetilde{{h}_n}$ which maps to the leaf-space of the lift of $F_n$. That image leaf-space is a sub-$\mathbb{R}$-tree of the leaf space $T$ of $\widetilde{F}$, namely, the leaf-space of the restriction of the foliation to  $\widetilde{\Sigma}_n$ that we henceforth denote by $T_n$. Thus we have:
 \begin{equation*}
 \widetilde{h_n}:  \widetilde{\Sigma}_n \to T_n \subset T.
 \end{equation*}

By the harmonicity and the uniqueness of such (non-degenerate) harmonic maps to $\mathbb{R}$-trees (or more generally, non-positively curved spaces, see \cite{Mese}),  this map solves the least energy (Dirichlet) problem for an equivariant map to the tree $T_n$ with the given boundary conditions.

\subsubsection{Step 2: Energy restricted to the annulus $A_n$}

The purpose of this subsection is to establish a lower bound on the energy of $\widetilde{h}_n$ when restricted to lifts of the annulus $A_n$.

Recall from Lemma \ref{truncc} that on the doubled surface, a cylinder with a spiralling foliation persists and descends to a foliated  embedded annulus $A(q_n)$ on $\Sigma_n$, with one boundary component $\partial \Sigma_n$. This is what we call the ``truncated center".

Note that since the truncated center $A(q_n)$ is a subcylinder of a model cylindrical end with parameters $R$ and $\alpha$, the transverse measure of a longitude is $L_n \sin \alpha$, where $L_n$ is as in (\ref{ln}), and that of a meridional circle is $R\cos\alpha$ (see Definition \ref{modelend}). \\ 

Let $\gamma$ denote the boundary curve  $\partial \Sigma_n$ that defines an essential  closed curve on the doubled surface; thus $\gamma$ is also the core of the cylinder $\hat{A}_n$.  (Topologically, this is the same curve for all $n$, and hence has no subscript ``$n$".) 

Similarly, let $\hat{\beta}$ denote a non-trivial  curve on  $\widehat{\Sigma_n}$,  symmetric under the involution $\mathcal{I}$,  that intersects $\gamma$ twice (see Figure 8) and let $\beta$ be the embedded arc in $\Sigma_n$ obtained under the quotient by the involution.
 In what follows, we shall use the transverse measures of these curves to control the position of the annulus $A(q_n)$  relative to $A_n$. \\

\textit{Notation.} In what follows $\tau_n(\sigma)$ denote the transverse measure of a topologically non-trivial embedded closed curve or arc $\sigma$ on $\Sigma_n$, where the transverse measure is with respect to the measured foliation  ${F_n}$.

\begin{figure}
  \centering
  \includegraphics[scale=0.5]{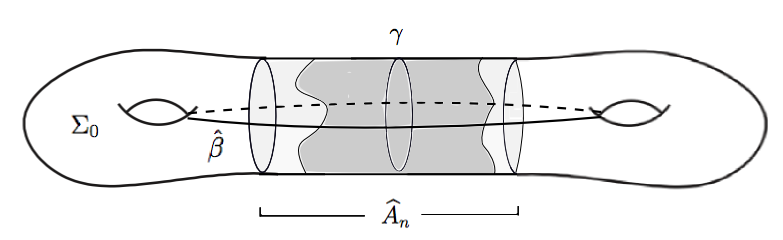}\\
 \caption{The  position of the ``truncated center" for $\mathcal{F}(\hat{q}_n)$ (shown shaded), relative to $\hat{A}_n$, is controlled by Lemma \ref{transv}.}
\end{figure}

\newpage

\begin{lem}\label{transv} For each $n\geq 1$, we have the following estimates of transverse measures:
\begin{itemize}
\item[(a)] $\tau_n(\gamma) = R\cos\alpha$.
\item[(b)] $\tau_n(\beta) \geq  2L_n\sin \alpha$, where $L_n$ is given by equation (\ref{ln}). 
\item [(c)]  $\tau_n(\beta \cap \Sigma_0) \leq T_0$, where $T_0$ is independent of $n$.
\item[(d)]  $\tau_n(\beta) \leq  2L_n\sin \alpha + T_0$.
\item [(e)]  $\tau_n(\beta \cap {A}_n) \geq 2L_n \sin \alpha - T_0$.
\end{itemize}
\end{lem}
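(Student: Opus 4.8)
My plan is to reduce each of the five estimates to a computation on the two regions whose foliation I understand explicitly: the model cylinder $A_n\cong\mathcal{C}_R(L_n)$, where $F_n$ is (by the construction of $F$ on $U$) the model end of Definition~\ref{modelend}, and the fixed compact piece $\Sigma_0$, where $F_n|_{\Sigma_0}=F|_{\Sigma_0}$ is independent of $n$. Fixing representatives of $\gamma$ and $\beta$ that realize the transverse measure, I would use throughout that transverse measure is additive over the decomposition of $\beta$ along $\partial\Sigma_0$, so that $\tau_n(\beta)=\tau_n(\beta\cap A_n)+\tau_n(\beta\cap\Sigma_0)$. I would also arrange, in choosing the auxiliary curve $\hat{\beta}$, that its two passages through $A_n$ are longitudinal arcs whose winding about $\gamma$ is zero.

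Estimate (a) is immediate: $\gamma=\partial\Sigma_n$ is a meridian of $\mathcal{C}_R(L_n)$, and by Definition~\ref{modelend} the transverse measure of a meridional circle of a model end is $R\cos\alpha$; since a meridian already minimizes transverse measure in its isotopy class, $\tau_n(\gamma)=R\cos\alpha$. For the lower bound (b), I note that $\beta=\hat{\beta}\cap\Sigma_n$ has both endpoints on $\gamma$ and runs into $\Sigma_0$, hence crosses $A_n$ exactly twice, each time from $\partial\Sigma_n$ ($x=0$) to $\partial\Sigma_0$ ($x=L_n$), with $L_n$ as in (\ref{ln}). Lifting a single crossing to the universal cover $\mathbb{H}$ of $A_n$ and recalling $\widetilde{m}_{R,\alpha}(x,\theta)=\theta\cos\alpha+x\sin\alpha$ from (\ref{lift}), the transverse measure of that crossing equals the total variation of $\widetilde{m}_{R,\alpha}$ along it, which is at least $\lvert\widetilde{m}_{R,\alpha}(\mathrm{end})-\widetilde{m}_{R,\alpha}(\mathrm{start})\rvert$. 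Because the winding of $\beta$ about $\gamma$ is a homotopy invariant of the fixed class of $\hat{\beta}$, which I arrange to be zero, the $\theta$-displacement of each crossing is pinned down and this lower bound is exactly $L_n\sin\alpha$; summing the two crossings gives $\tau_n(\beta)\ge 2L_n\sin\alpha$.

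For (c), the sub-arc $\beta\cap\Sigma_0$ lies in the fixed compact surface $\Sigma_0$ and represents a homotopy class (rel $\partial\Sigma_0$) determined by $\hat{\beta}$ alone, hence independent of $n$; since the foliation $F|_{\Sigma_0}$ is likewise fixed, its transverse measure is bounded by a constant $T_0=T_0(\Sigma_0,F,\beta)$. Estimate (d) then follows by exhibiting the competitor consisting of two straight longitudinal crossings of $A_n$ (transverse measure $L_n\sin\alpha$ each, by Definition~\ref{modelend}) together with the fixed arc in $\Sigma_0$, giving $\tau_n(\beta)\le 2L_n\sin\alpha+T_0$. Finally (e) follows by subtraction: the additivity along $\partial\Sigma_0$ combined with (b) and (c) yields $\tau_n(\beta\cap A_n)=\tau_n(\beta)-\tau_n(\beta\cap\Sigma_0)\ge 2L_n\sin\alpha-T_0$.

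The main obstacle is the lower bound (b). Because $A_n$ is spiralling (for $\cos\alpha\ne 0$ its leaves wind from one boundary component to the other), an arc crossing $A_n$ is a priori free to follow a leaf and thereby carry transverse measure close to zero; the estimate must genuinely use that $\beta$ represents a fixed homotopy class whose winding about $\gamma$ is bounded, and indeed arranged to vanish. Controlling this $\theta$-displacement—equivalently, ruling out that a transverse-measure-minimizing representative of $\beta$ retreats along the leaves of the twisting foliation—is precisely the point of passing to $\mathbb{H}$ and estimating the total variation of $\widetilde{m}_{R,\alpha}$. With that in hand, the companion estimates (a), (c), (d), (e) reduce to bookkeeping with the additivity of transverse measure and Definition~\ref{modelend}.
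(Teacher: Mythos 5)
Your proposal works throughout with the literal foliation $F_n$, which by construction is exactly the model end on $A_n$ and a fixed foliation on $\Sigma_0$. But the quantities $\tau_n$ in this lemma must be invariants of the homotopy class (you yourself pass to measure-minimizing representatives), because they are later compared with distances in the tree $T_n$, i.e.\ with transverse measures of the Hubbard--Masur foliation $\mathcal{F}(q_n)$, which agrees with $F_n$ only up to isotopy and Whitehead moves. For a minimizing representative your decomposition along $\partial \Sigma_0$ loses all of its structure: the annulus that actually carries the large crossing measure --- the ``truncated center'' $A(q_n)$ of Lemma~\ref{truncc} --- need not sit inside $A_n$; it can retreat into $\Sigma_0$, either as a peripheral annulus or by sending simply-connected ``tongues'' deep into $\Sigma_0$. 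Part (c) is precisely the assertion that this displacement is uniformly bounded, and it is the heart of the lemma (the paper calls it ``the most important estimate''). The paper proves it by two arguments your proposal does not contain: a peripheral annulus embedded in $\Sigma_0$ has modulus bounded by the reciprocal of the extremal length of $\partial\Sigma_0$, and a tongue of transverse depth $D$ is excluded by constructing a competitor arc $\beta'$ homotopic to $\beta$ that avoids the tongue and has transverse measure $2L_n\sin\alpha - 2D + O(1)$, which is consistent with the lower bound (b) only if $D=O(1)$. Your proof of (c) --- the class of $\beta\cap\Sigma_0$ rel $\partial\Sigma_0$ is fixed and $F|_{\Sigma_0}$ is fixed, hence the measure is bounded --- assumes exactly what has to be shown: for a minimizing representative, neither is the class of the middle sub-arc independent of $n$ (it can absorb twisting around $\partial\Sigma_0$), nor is the relevant foliation on $\Sigma_0$ the fixed one.

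There is a second, independent gap in (b). You assert that ``the winding of $\beta$ about $\gamma$ is a homotopy invariant of the fixed class of $\hat\beta$,'' so that the $\theta$-displacement of each crossing is pinned down. It is not: a homotopy of $\beta$ can slide its endpoints along $\partial\Sigma_n$ and insert twisting near $\partial\Sigma_0$, changing the angular displacement of each individual crossing at will while compensating in the middle sub-arc; and, as you observe yourself, a crossing that follows the spiralling leaves carries transverse measure near zero. So your single-crossing lower bound of $L_n\sin\alpha$ fails for general representatives, and arranging one nice representative only yields upper bounds such as (d), never the lower bounds (b) and (e), which quantify over all representatives. What actually controls the situation is a trade-off: each compensating twist around $\partial\Sigma_0$ costs transverse measure about $R\cos\alpha$, the same rate (up to bounded error) at which winding reduces the measure of a crossing, and the two crossings of $\beta$ are oppositely oriented; this is why the paper, at the point where the issue genuinely matters (Corollary~\ref{cor5.2}), concatenates a candidate arc with its own reverse so that the winding cancels, instead of pinning the winding of a single crossing. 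Your parts (a) and (d) are fine and agree with the paper, and (e) is a correct formal consequence of (b) and (c); but without a correct proof of (b), and above all of (c) controlling the truncated center of $\mathcal{F}(q_n)$, the lemma is not established.
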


\begin{proof}
Part (a) follows from the fact that the asserted transverse measure is that of $\partial \Sigma_n$ with respect to $F_n$. However recall that $F_n$ is the restriction of the foliation on a model cylindrical end with parameters $R$ and $\alpha$, and hence the boundary acquires the transverse measure $R\cos\alpha$ of the core curve.

Part (b) follows from the fact that by construction, the truncated center is an annulus with core curve $\gamma$, and the arc $\beta$ crosses the truncated center (see Defn. \ref{truncc})   twice, and hence the transverse measure is greater than twice the transverse measure across ${A}_n$. As in part (a), the latter computation is just that of the model cylindrical end for parameters $R,\alpha$, and gives $2L_n\sin\alpha$.

Part (c) is the most important estimate. First note that $\beta \cap \Sigma_0$ forms an arc on the fixed subsurface $\Sigma_0$ whose topological type is independent of $n$.

 The transverse measure of $\beta \cap \Sigma_0$ depends on how the foliation $\mathcal{F}(q_n)$  (that is equivalent to $F_n$) and in particular, the truncated center $A(q_n)$ meets the subsurface $\Sigma_0$.  (We need to check that the truncated center $A(q_n)$  does not move ``far" from $A_n$.)

Note that the transverse measure with respect to $\mathcal{F}(q_n)$ of a fixed embedded arc in the \textit{complement} of the truncated center is uniformly bounded. 

\begin{figure}
  \centering
  \includegraphics[scale=0.35]{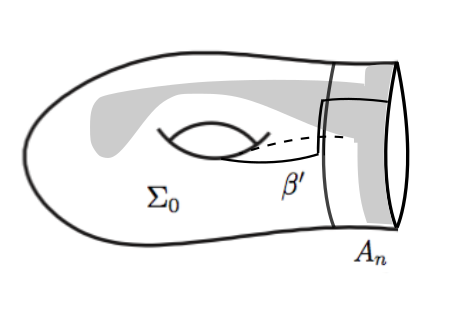}\\
 \caption{Construction of the arc $\beta^\prime$ on $\Sigma_n$  avoiding a ``tongue" of the truncated center (shown shaded).}
\end{figure}

Hence, it suffices to prove that  the truncated center for $\mathcal{F}(q_n)$  cannot venture too ``deep" in $\Sigma_0$. For this, we consider the following two cases: \\
(1) The intersection of the truncated center with $\Sigma_0$ is an annular region with $\partial \Sigma_0$ as one of the boundaries. In this case, the  modulus of such a peripheral annulus that can embed in $\Sigma_0$  is bounded above by the reciprocal of the extremal length of $\partial \Sigma_0$.  The latter is a fixed quantity (independent of $n$). \\
(2) The intersection of the truncated center with $\Sigma_0$ is a union of simply-connected regions, each bounded by an arc along $\partial \Sigma_0$, and another lying in the interior of $\Sigma_0$. Consider one component of this type. Such a  ``tongue" $\Omega$ cannot be too large in transverse measure: let $D$ denote the maximal transverse measure of any arc in that component. Then since the total transverse measure across the truncated center is $L_n\sin\alpha$, the transverse measure of some longitudinal arc 
in the part of the truncated center $\mathcal{F}(q_n) \cap A_n$
across the annulus $A_n$ will be $L_n\sin \alpha - D$.  One can then construct an arc $\beta^\prime$ on $\Sigma_n$, homotopic to $\beta$ through a homotopy fixing the endpoints on $\partial \Sigma_n$, by joining a sub-arc  in $\Sigma_0$ in the complement of the ``tongue" $\Omega$, and two longitudinal arcs across $A_n$ as above, by two arcs lying in $\partial\Sigma_0$. 
 The transverse measure of the arc away from the ``tongue" is uniformly bounded (independent of $n$), and so is that of any arc lying on $\partial \Sigma_0$ (bounded, in fact, by $R\cos \alpha$). Adding these contributions, we derive that the transverse measure of $\beta^\prime$ satisfies
\begin{equation*}
2L_n \sin\alpha \leq
 \tau_n(\beta^\prime) = 2L_n \sin \alpha  - 2D + O(1)  
 \end{equation*}
where the inequality was observed in (b). Hence we obain a uniform upper bound on $D$, as required.

Together, (1) and (2) show that only a uniformly bounded part of the truncated center can lie  in $\Sigma_0$, in the sense that any arc contained in the intersection of $\Sigma_0$ and the truncated center, has a uniform upper bound on its transverse measure.

  Thus the truncated center contributes only a uniformly bounded transverse measure to the arc $\beta \cap \Sigma_0$.
This completes the proof of (c).
 
Part (d) now follows the previous observations when one decomposes $\beta$ into arcs across the truncated center, and its complement. 

Finally, part (e) follows from (b) and (c).
 \end{proof}

\begin{cor}\label{cor5.2}  Let $\gamma$ be any arc across the cylinder $A_n$ on $\Sigma_n$.
The transverse measure of $\gamma$ satisfies 
\begin{equation*}
L_n\sin \alpha-T_0 - \frac{R\cos\alpha}{2}  \leq \tau(\gamma) \leq   L_n \sin \alpha+T_0/2
\end{equation*}
where $T_0$ is the constant (independent of $n$) in Lemma \ref{transv}.
\end{cor}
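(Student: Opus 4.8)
The plan is to reduce the two-sided estimate for an arbitrary crossing arc to the data already assembled in Lemma~\ref{transv}. Throughout I would use that the transverse measure of the (relative) homotopy class of an arc across $A_n$ is an invariant of the measure-equivalence class, so that it may be computed interchangeably with respect to $F_n$ or to $\mathcal{F}(q_n)$; this is what lets me pass between the ``model'' description of $F_n$ on $A_n$, where a longitude has measure $L_n\sin\alpha$ and the core has measure $R\cos\alpha$ (Definition~\ref{modelend}), and the geometric truncated center $A(q_n)$ of the realizing differential (Lemma~\ref{truncc}), whose position relative to $A_n$ is what Lemma~\ref{transv} controls.

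First I would record what the lemma gives for a single crossing. The arc $\beta$ meets $A_n$ in two subarcs, each an arc across $A_n$, so their transverse measures sum to $\tau_n(\beta\cap A_n)$. Combining Lemma~\ref{transv}(e) with the trivial inequality $\tau_n(\beta\cap A_n)\le\tau_n(\beta)$ and Lemma~\ref{transv}(d), I obtain $2L_n\sin\alpha - T_0 \le \tau_n(\beta\cap A_n)\le 2L_n\sin\alpha + T_0$. I would then compare an arbitrary arc $\gamma$ across $A_n$ to the two crossing subarcs of $\beta$: any two essential arcs across the annulus $A_n$ differ by a homotopy sliding their endpoints along the two boundary circles, and by Lemma~\ref{transv}(a) the measure picked up along such a slide is bounded by the meridional measure $R\cos\alpha$, with the relevant winding confined to a single fundamental domain, giving a correction of at most $R\cos\alpha/2$. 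Summing the two comparisons against the total $\tau_n(\beta\cap A_n)$ and using the lower bound above yields the stated lower estimate $\tau(\gamma)\ge L_n\sin\alpha - T_0 - R\cos\alpha/2$.

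For the (sharper) upper bound I would argue directly through the truncated center rather than through $\beta$, which is what removes the meridional term. Since $A(q_n)$ is a model subcylinder in which a longitude has transverse measure exactly $L_n\sin\alpha$, I would route $\gamma$ as such a longitude together with the part of $A_n$ lying outside $A(q_n)$; by the tongue analysis in the proof of Lemma~\ref{transv}(c) that exterior part contributes at most the bounded amount $T_0/2$, so that $\tau(\gamma)\le L_n\sin\alpha + T_0/2$. The asymmetry of the two bounds is thus an artifact of the two routes: the lower bound carries the full displacement $T_0$ together with the boundary slide $R\cos\alpha/2$, while the upper bound is obtained by comparison with the model longitude directly.

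The step I expect to be the main obstacle is precisely this comparison between an arbitrary crossing arc and the reference longitude of $A(q_n)$: one must ensure that the taut representative of $\gamma$ is realized through the bulk of the truncated center and cannot cheaply avoid it by a deep excursion into $\Sigma_0$, which is exactly the content of the bounded-tongue statement in Lemma~\ref{transv}(c), and that the meridional endpoint slide is genuinely bounded by half the core measure via part~(a). Once that comparison is set up, the remaining accounting of the contributions from the longitude ($L_n\sin\alpha$), the bounded displacement ($T_0$ or $T_0/2$), and the meridional slide ($R\cos\alpha/2$) is routine bookkeeping.
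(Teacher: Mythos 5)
Your lower-bound half is essentially a rearrangement of the paper's argument, which runs by contradiction: if some arc $\gamma$ across $A_n$ had measure less than $L_n\sin\alpha - T_0 - R\cos\alpha/2$, the paper concatenates two oppositely-oriented copies of $\gamma$ with a connector in $\Sigma_0\cup\partial\Sigma_0$ of measure at most $T_0+R\cos\alpha$ (from Lemma~\ref{transv}(a),(c)) to produce an arc homotopic to $\beta$ of measure less than $2L_n\sin\alpha - T_0$, contradicting Lemma~\ref{transv}(e). Your variant --- distributing $\tau_n(\beta\cap A_n)$ over the two crossing subarcs of $\beta$ and comparing $\gamma$ to each by endpoint slides --- is workable in principle, but your slide bookkeeping undercounts by a factor of two: each of the \emph{two} endpoints may need to slide up to half of its boundary circle, and arcs along $\partial\Sigma_0$ are only bounded by $R\cos\alpha$ (this is the bound invoked inside the proof of Lemma~\ref{transv}(c)), so the total correction is of size $R\cos\alpha$, not $R\cos\alpha/2$. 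This yields a bound of the form $L_n\sin\alpha - T_0/2 - R\cos\alpha$, which is of the right shape ($L_n\sin\alpha$ minus an $n$-independent constant) but is not the stated inequality and, depending on the relative sizes of $T_0$ and $R\cos\alpha$, neither implies it nor is implied by it.

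The genuine gap is in your upper bound. The constant $T_0/2$ is not available to your single-crossing route through the truncated center: Lemma~\ref{transv}(c) bounds the \emph{total} measure $\tau_n(\beta\cap\Sigma_0)$ by $T_0$, a quantity accumulated over both excursions of $\beta$ into $\Sigma_0$, and nothing in its proof halves this or bounds by $T_0/2$ the contribution of a single connector joining a longitude of $A(q_n)$ to $\partial\Sigma_0$; note also that the inner boundary of $A(q_n)$ may sit inside $A_n$, so your connector may have to cross a portion of $A_n\setminus A(q_n)$ about which part (c) says nothing directly. The most your route yields (say by splitting off the $2L_n\sin\alpha$ crossed by $\beta$ inside $A(q_n)$ from Lemma~\ref{transv}(d)) is $\tau(\gamma)\leq L_n\sin\alpha + T_0 + O(R\cos\alpha)$. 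In the paper, the factor $1/2$ is an artifact of the same doubling trick as in the lower bound: two oppositely-oriented copies of $\gamma$ together with an arc in $\Sigma_0$ form an arc transversely homotopic to $\beta$, so $\tau(\gamma) > L_n\sin\alpha + T_0/2$ would force that arc to have measure exceeding $2L_n\sin\alpha + T_0$, contradicting Lemma~\ref{transv}(d); the halving occurs when the doubled inequality is divided by two, not by halving the tongue estimate, so your explanation of the asymmetry of the two bounds is also incorrect. Since the later applications (Lemma~\ref{lower} and Lemma~\ref{modelBound}) only require $n$-independent constants, your weaker bound would serve those purposes, but as written the proposal does not prove the corollary as stated.
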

\begin{proof} The upper and lower bounds are obtained as follows:\\
(1) If the transverse measure of some arc is less than $L_n\sin \alpha-T_0 - R\cos\alpha/2$, then two oppositely-oriented copies of that arc together with an arc in $\Sigma_0 \cup \partial \Sigma_0$ of  transverse measure bounded above (See Lemma~\ref{transv}(a),(c)) by $T_0+ R\cos\alpha $ will produce an arc homotopic to $\beta$ and of transverse measure less than $2L_n \sin\alpha  - T_0$, contradicting Lemma~\ref{transv} (e). \\
(2) If the transverse measure of some arc is more than $L_n \sin \alpha+T_0/2$, then two oppositely-oriented copies of the arc together with an arc in $\Sigma_0$ produces an  arc transversely homotopic to $\beta$ and of transverse measure greater than $2L_n \sin\alpha + T_0$, contradicting Lemma \ref{transv} (d). \qed
\end{proof}

We combine the above estimate with an energy estimate in \S4 to provide the following crucial \textit{lower} bound on energy.

\begin{lem}\label{lower} For all $n$, and any lift $\widetilde{A}_n$ of the annulus $A_n$, we have 
\begin{equation*}
\bar{\mathcal{E}}(\widetilde{h}_n\vert_{\widetilde{A}_n}) \geq \frac{1}{2}{R^2}M_n - K_0 
\end{equation*}
where $M_n$ is the modulus of ${A}_n$, and $K_0>0$ is independent of $n$.

(Recall that $\bar{\mathcal{E}}(\cdot)$ is the equivariant energy, or equivalently, the energy of the restriction to a fundamental domain.)  
\end{lem}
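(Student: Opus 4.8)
The plan is to apply the lower-bound Proposition~\ref{minim} to the restriction of $\widetilde{h}_n$ to a fundamental domain of a lift of $A_n$, using the energy-minimizing property of $\widetilde{h}_n$ together with the transverse-measure estimates of Lemma~\ref{transv} and Corollary~\ref{cor5.2}. Recall from Proposition~\ref{minim} that any $\mathbb{Z}$-equivariant map from the universal cover of an annulus of modulus $M$ that stretches vertical arcs by at least $C$ and horizontal arcs by at least $\tau$ has energy at least $\frac{1}{2}M(C^2 + \tau^2/M^2)$. The strategy is to identify $A_n$ with a flat cylinder $\mathcal{C}_R(L_n)$ (where $L_n = R M_n$), so that the meridional and longitudinal transverse measures provide the constants $C$ and $\tau$ to plug into the lower bound.

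\textbf{Key steps.} First I would fix a fundamental domain $D_n = [0,L_n]\times[0,R]$ in a lift $\widetilde{A}_n$, with coordinates coming from the uniformization of $A_n$ to $\mathcal{C}_R(L_n)$; here the meridional direction is the $\theta$-direction and the longitudinal direction is the $x$-direction, and $M_n = L_n/R$. Second, I would invoke the defining property of a collapsing map: since distances in the target tree $T$ are given by transverse measures of the foliation $\mathcal{F}(q_n) \simeq F_n$, the image of a horizontal ($x$-) arc has length equal to the transverse measure of that arc, and similarly for vertical ($\theta$-) arcs. By Corollary~\ref{cor5.2}, every longitudinal arc across $A_n$ has transverse measure at least $\tau := L_n\sin\alpha - T_0 - \tfrac{R\cos\alpha}{2}$, and by Lemma~\ref{transv}(a) the meridional direction has transverse measure $C := R\cos\alpha$. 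Third, I would apply Proposition~\ref{minim} with these values of $C$ and $\tau$ to obtain
\begin{equation*}
\bar{\mathcal{E}}(\widetilde{h}_n\vert_{\widetilde{A}_n}) \geq \frac{1}{2}M_n\left(C^2 + \frac{\tau^2}{M_n^2}\right) = \frac{1}{2}M_n (R\cos\alpha)^2 + \frac{R}{2L_n}\left(L_n\sin\alpha - T_0 - \tfrac{R\cos\alpha}{2}\right)^2.
\end{equation*}
Fourth, I would expand the second term and check that, using $M_n = L_n/R$, the two leading contributions combine to $\frac{1}{2}R^2 M_n(\cos^2\alpha + \sin^2\alpha) = \frac{1}{2}R^2 M_n$, while the cross-terms and the $T_0$-dependent remainder are bounded below by a negative constant $-K_0$ independent of $n$ (the linear-in-$L_n$ cross-term $-R\sin\alpha\,(T_0 + \tfrac{R\cos\alpha}{2})$ is independent of $n$ since $L_n$ cancels against $M_n^{-1} = R/L_n$, and the quadratic remainder divided by $L_n$ tends to $0$).

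\textbf{Main obstacle.} The analytic content—Proposition~\ref{minim}—is already in hand, so the genuine difficulty is not the length-area estimate itself but justifying that its two hypotheses hold for $\widetilde{h}_n$. The subtle point is hypothesis (2): I must be sure the \emph{longitudinal} arcs really do stretch by the full amount $\tau$ claimed, and this is precisely where the control on the position of the truncated center $A(q_n)$ enters. The foliation $\mathcal{F}(q_n)$ need not coincide with the model foliation on $A_n$—it differs by isotopy and Whitehead moves, and the spiralling center could in principle retreat into $\Sigma_0$—so the lower bound on longitudinal transverse measure is not automatic. The content of Corollary~\ref{cor5.2}, which rests on the delicate case analysis in Lemma~\ref{transv}(c) bounding how deep the truncated center can venture into the fixed subsurface $\Sigma_0$, is exactly what guarantees the uniform lower bound $\tau \geq L_n\sin\alpha - O(1)$; I expect assembling this geometric input correctly to be the crux, with the final algebraic bookkeeping to extract $K_0$ being routine.
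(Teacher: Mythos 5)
Your proposal is correct and takes essentially the same route as the paper: both apply Proposition~\ref{minim} on a fundamental domain of $\widetilde{A}_n$ with $C = R\cos\alpha$ coming from Lemma~\ref{transv}(a) and $\tau = L_n\sin\alpha - O(1)$ coming from Corollary~\ref{cor5.2}, then expand the square so that the leading terms combine to $\frac{1}{2}R^2 M_n$ while the cross-term gives an $n$-independent constant $K_0$. The small discrepancy in your additive constant ($T_0 + R\cos\alpha/2$ versus the paper's bookkeeping) is immaterial, since $K_0$ need only be independent of $n$.
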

\begin{proof}
Choose a fundamental domain of the $\mathbb{Z}$-action on $\widetilde{A}_n$ that we call $A_n$, by abuse of notation. 

From Corollary \ref{cor5.2}, we know that any longitudinal arc across ${A}_n$ must have transverse measure at least $L_n \sin\alpha - T_0 - R\cos\alpha$.

Moreover, note that any meridional circle in ${A}_n$  is homotopic to $\gamma$ and hence has transverse  measure at least $R\cos\alpha$ by  part (a) of Lemma \ref{transv}.

Then Proposition \ref{minim} applies with $C = R\cos\alpha$ and $\tau = L_n \sin \alpha - T_0$, and provides the lower bound
\begin{align*}
\bar{\mathcal{E}}(\widetilde{h}_n\vert_{\widetilde{A}_n})  &\geq \frac{1}{2}M_n \cdot \left(R^2\cos^2\alpha+ \frac{\left(L_n \sin\alpha - (T_0- R\cos\alpha/2) \right)^2}{L_n^2/R^2}\right) \\
&\geq \frac{1}{2}M_n \cdot \left(R^2\cos^2\alpha+ \frac{L_n^2\sin^2\alpha - 2(T_0 - R\cos\alpha/2)\cdot L_n\sin\alpha}{L_n^2/R^2}\right)\\
&= \frac{1}{2}M_nR^2  - {(T_0 -  R\cos\alpha/2)R\sin\alpha}
\end{align*}
which is of the required form. \end{proof}

\textit{Remark.} The upper bound in Corollary \ref{cor5.2} is used the final section of the paper.

\subsubsection{Step 2, continued:  Convergence of $\widetilde{h}_n$}

Recall that $\Sigma_n = \Sigma_0 \cup A_{n}$
where $A_{n}$ is a  conformal annulus, obtained by excising subdisks of  a chosen coordinate disk $U$ about the puncture $P$. (See the beginning of \S5.2.1)

We shall show that the sequence of harmonic maps  defined on their universal covers converges after passing to a subsequence. 

For this, we shall need the following uniform energy estimate that shall use the work of the previous subsection:

\begin{lem}[Uniform energy bound]\label{ebd} The equivariant energy $\bar{\mathcal{E}}\left(\widetilde{h}_n\vert_{\widetilde{\Sigma_0}}\right) \leq K_1$ where $K_1$  is independent of $n$. 
\end{lem}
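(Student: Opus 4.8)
The plan is to establish a uniform upper bound on the energy of $\widetilde{h}_n$ restricted to $\widetilde{\Sigma_0}$ by playing the lower bound of Lemma \ref{lower} against the \emph{energy-minimizing} property of $\widetilde{h}_n$. The key structural fact to exploit is that $\widetilde{h}_n$ is the unique energy-minimizer among equivariant maps $\widetilde{\Sigma}_n \to T_n$ with its boundary data (as noted at the end of \S5.2.1, using \cite{Mese}). This means that to bound the energy of $\widetilde{h}_n$ from above, it suffices to exhibit \emph{any} competitor map with the same boundary values on $\partial\widetilde{\Sigma}_n$ whose total energy is controlled, and then subtract off the (large, modulus-proportional) energy that $\widetilde{h}_n$ must spend on the annulus $\widetilde{A}_n$.

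First I would construct a comparison map $\widetilde{g}_n : \widetilde{\Sigma}_n \to T$ that agrees with $\widetilde{h}_n$ along the outer boundary $\partial\widetilde{\Sigma}_n$ (the lift of the inner boundary of $A_n$ near the puncture) but which, on the annulus $\widetilde{A}_n$, is replaced by the lift of the explicit \emph{model map} $m_{R,\alpha}$ restricted to the truncated cylinder $\mathcal{C}_R(L_n)$. By Lemma \ref{ener}, this model piece has energy exactly $\frac{1}{2}R^2 M_n$ (with $M_n = L_n/R$ the modulus of $A_n$). On the fixed compact piece $\widetilde{\Sigma_0}$, the competitor can be taken to be a fixed map independent of $n$ (for instance the collapsing map of the limiting foliation $F$ on $\Sigma_0$, which has bounded equivariant energy since $\Sigma_0$ is compact); only the interface along $\partial\Sigma_0$ needs to be reconciled, and this can be absorbed into a bounded "correction collar." This yields
\begin{equation*}
\bar{\mathcal{E}}\left(\widetilde{h}_n\right) \leq \bar{\mathcal{E}}\left(\widetilde{g}_n\right) \leq \frac{1}{2}R^2 M_n + K_2
\end{equation*}
for some $K_2$ independent of $n$, where the total energy is split as $\bar{\mathcal{E}}(\widetilde{h}_n) = \bar{\mathcal{E}}(\widetilde{h}_n\vert_{\widetilde{\Sigma_0}}) + \bar{\mathcal{E}}(\widetilde{h}_n\vert_{\widetilde{A}_n})$.

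Then I would combine this upper bound on total energy with the lower bound of Lemma \ref{lower}, namely $\bar{\mathcal{E}}(\widetilde{h}_n\vert_{\widetilde{A}_n}) \geq \frac{1}{2}R^2 M_n - K_0$. Subtracting, the dangerous modulus-proportional term $\frac{1}{2}R^2 M_n$ cancels, leaving
\begin{equation*}
\bar{\mathcal{E}}\left(\widetilde{h}_n\vert_{\widetilde{\Sigma_0}}\right) = \bar{\mathcal{E}}(\widetilde{h}_n) - \bar{\mathcal{E}}(\widetilde{h}_n\vert_{\widetilde{A}_n}) \leq \left(\frac{1}{2}R^2 M_n + K_2\right) - \left(\frac{1}{2}R^2 M_n - K_0\right) = K_0 + K_2 =: K_1,
\end{equation*}
which is exactly the desired bound, independent of $n$.

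The main obstacle I anticipate is the construction of the competitor map $\widetilde{g}_n$ with \emph{genuinely} matching boundary data and truly bounded interface energy: one must verify that the model map on $\mathcal{C}_R(L_n)$ and the fixed map on $\Sigma_0$ can be glued across $\partial\Sigma_0$ without incurring energy that grows with $n$, and that the resulting map is admissible (equivariant, locally square-integrable, correct boundary values for the Dirichlet problem on $\Sigma_n$). This is where the estimates controlling the position of the truncated center (Corollary \ref{cor5.2} and Lemma \ref{transv}) should enter: they guarantee that the foliation—and hence the harmonic map—has not drifted far from the model geometry near $\partial\Sigma_0$, so the interface can be handled in a fixed collar whose energy contribution is uniformly bounded. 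The sharpness of the cancellation (both bounds have leading coefficient exactly $\frac{1}{2}R^2$) is what makes this argument work, and it is precisely the content that Lemma \ref{ener} and Proposition \ref{minim} were set up to provide. This subtraction-of-matching-leading-terms strategy is the one adapted from \cite{Wolf3}.
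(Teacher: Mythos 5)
Your proposal is correct and follows essentially the same route as the paper: build a competitor equal to the model map $\widetilde{m}_{R,\alpha}$ on $\widetilde{A}_n$ and to the fixed map $\widetilde{h}_0$ on $\widetilde{\Sigma}_0$, invoke the energy-minimizing property of $\widetilde{h}_n$ together with Lemma \ref{ener} to get $\bar{\mathcal{E}}(\widetilde{h}_n) \leq \bar{\mathcal{E}}(\widetilde{h}_0) + \frac{1}{2}R^2\,\mathrm{mod}(A_n)$, and then subtract the lower bound of Lemma \ref{lower} so the modulus-proportional terms cancel. The only difference is that the ``correction collar'' you worry about is unnecessary in the paper's setup: since $F$ was isotoped at the outset to coincide exactly with the model foliation on $U$, the two pieces of the competitor are collapsing maps of the same foliation along $\partial\Sigma_0$ and glue with no extra energy, so Corollary \ref{cor5.2} and Lemma \ref{transv} are not needed at this point (they enter only through Lemma \ref{lower}).
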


\begin{proof}[Proof of Lemma \ref{ebd}] 
 
We use an adaptation of a technique from \cite{Wolf3}: 

We begin with the obvious equality \begin{equation*}
\bar{\mathcal{E}}(\widetilde{h}_n\vert_{\widetilde{\Sigma}_0}) + \bar{\mathcal{E}}(\widetilde{h}_n\vert_{\widetilde{A}_n})  = \bar{\mathcal{E}}(\widetilde{h}_n).
\end{equation*}

We can equivariantly build a map $\widetilde{g}$ which is a candidate for the energy-minimizing problem that $\widetilde{h}_n$ solves (see the last part of \S5.2.1), by defining $\widetilde{g}$ to equal $\widetilde{h}_0$ on $\widetilde{\Sigma}_0$, and to equal the model map  $\widetilde{m}_{R,\alpha}$ on $\widetilde{A}_n$, for each lift of $\Sigma_0$ and $A_n$.

We thus obtain:
\begin{equation}\label{enhn}
 \bar{\mathcal{E}}(\widetilde{h}_n) \leq \bar{\mathcal{E}}(\widetilde{g}) = \bar{\mathcal{E}}(\widetilde{h}_0) + \bar{\mathcal{E}}(\widetilde{m}_{R,\alpha}\vert_{\widetilde{A}_n})   =  \bar{\mathcal{E}}(\widetilde{h}_0) + \frac{1}{2}R^2\cdot \text{mod}(A_n)
 \end{equation}
where the last equality follows from Lemma \ref{ener}. 

However from Lemma  \ref{lower}  we know
\begin{equation*}
 \frac{1}{2}R^2\cdot \text{mod}(A_n)  \leq  \bar{\mathcal{E}}(\widetilde{h}_n\vert_{\widetilde{A}_n}) + K_0
\end{equation*}
where $K_0$ is independent of $n$.

By combining the three displayed inequalities, we obtain 
 \begin{equation*}
\bar{\mathcal{E}}(\widetilde{h}_n\vert_{\Sigma_0}) \leq \bar{\mathcal{E}}(\widetilde{h}_0) + K_0 
\end{equation*}
where the right hand side is independent of $n$, as required.
  \end{proof}
  
 \textit{Remark.} The same argument applies for the energy of the restriction $\widetilde{h}_n\vert_{\widetilde{\Sigma}_m}$, for any subsurface $\Sigma_{m}$ for some fixed $m\geq 1$, and hence to any compact set $K \subset \Sigma\setminus p$ since $K$ is contained in some such subsurface. \\

 Using a standard argument  (see \cite{Wolf2} for details) we then have:
 
 \begin{lem} The maps $\widetilde{h}_n$ uniformly converge to an equivariant harmonic map $\tilde{h}:\widetilde{\Sigma \setminus P} \to T$.

 \end{lem}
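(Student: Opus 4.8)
The plan is to extract the limit by the now-standard compactness scheme for equivariant harmonic maps into $\mathbb{R}$-trees, feeding in the uniform \emph{local} energy bounds just obtained. Concretely, I would: (i) promote the local energy bounds to a uniform modulus-of-continuity estimate on compact sets using interior regularity for maps into non-positively curved targets; (ii) apply Arzel\`a--Ascoli to obtain a uniformly convergent subsequence on compacta; and (iii) verify that the resulting limit $\widetilde{h}$ is itself harmonic and equivariant. The essential input has already been supplied by Lemma~\ref{ebd} and the remark following it, which give, for every fixed $m$ (and hence for every compact $K \subset \widetilde{\Sigma \setminus P}$, since $K$ lies in some $\widetilde{\Sigma}_m$), a bound $\bar{\mathcal{E}}(\widetilde{h}_n\vert_{\widetilde{\Sigma}_m}) \leq K_1$ that is independent of $n$.

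First I would invoke the interior estimates of Korevaar--Schoen (\cite{KorSch1}; see also \cite{Mese} for the tree/NPC setting) to convert a local energy bound on a ball into a uniform Lipschitz (or H\"older) bound on a concentric ball of half the radius. Because the target $T$ is an $\mathbb{R}$-tree, hence CAT$(0)$, these estimates apply verbatim. Covering a given compact $K$ by finitely many such balls then yields an equicontinuity estimate for the family $\{\widetilde{h}_n\}$ on $K$ that is uniform in $n$. To run Arzel\`a--Ascoli I must also prevent the images from escaping to infinity in $T$: fixing a basepoint $\tilde{x}_0 \in \widetilde{\Sigma}_0$, the point $\widetilde{h}_n(\tilde{x}_0)$ is the image of the leaf of $\widetilde{F}_n$ through $\tilde{x}_0$ inside the nested subtree $T_n \subset T$, and the uniform transverse-measure bounds over the fixed subsurface $\Sigma_0$ (Lemma~\ref{transv}(c) and Corollary~\ref{cor5.2}) keep this basepoint image, and hence the images of all nearby points, in a bounded region of $T$. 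With uniform equicontinuity and a bounded basepoint image, Arzel\`a--Ascoli delivers a subsequence converging uniformly on compact subsets of $\widetilde{\Sigma \setminus P}$ to a continuous map $\widetilde{h}: \widetilde{\Sigma \setminus P} \to T$.

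It then remains to check that the limit is harmonic and equivariant. Equivariance is immediate: each $\widetilde{h}_n$ intertwines the deck-group action on domain and on $T$, the action on $T$ is by isometries, and uniform convergence commutes with it, so $\widetilde{h}$ inherits equivariance. For harmonicity, which is a local condition, I would use that a locally uniform limit of energy-minimizing maps with locally uniformly bounded energy is again energy-minimizing for its own boundary data on each compact set; this follows from lower semicontinuity of the energy under such convergence together with the variational characterization of harmonic maps into NPC targets (again \cite{KorSch1}, \cite{Mese}), exactly as in \cite{Wolf2}. The hard part is really concentrated in step (i)--(ii): producing genuine compactness from only \emph{local} energy control, since the total energies $\bar{\mathcal{E}}(\widetilde{h}_n)$ are unbounded as $n \to \infty$. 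The non-escape of the images is what could fail in general, and it is precisely the transverse-measure estimates over $\Sigma_0$ that rule this out and guarantee a non-degenerate limiting map.
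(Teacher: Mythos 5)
Your overall scheme is the same as the paper's: uniform local energy bounds (Lemma~\ref{ebd} and the remark following it) give equicontinuity on compacta, uniform boundedness of the images substitutes for local compactness of the target via Korevaar--Schoen (the paper cites Theorem 2.1.3 and Remark 2.1.4 of \cite{KorSch2}), Arzel\`a--Ascoli plus a diagonal argument over the exhaustion produces the limit, and equivariance and harmonicity pass to the limit by standard arguments. Your substitution of Korevaar--Schoen interior Lipschitz estimates for the paper's Courant--Lebesgue argument in the equicontinuity step is a harmless variant. The genuine gap is in your non-escape step. You write that $\widetilde{h}_n(\tilde{x}_0)$ ``is the image of the leaf of $\widetilde{F}_n$ through $\tilde{x}_0$'' in the nested subtree $T_n \subset T$. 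This is false: $\widetilde{h}_n$ is the collapsing map of the foliation $\mathcal{F}(\widetilde{q}_n)$ of the Hubbard--Masur differential, which is only \emph{measure-equivalent} to $\widetilde{F}_n$, differing from it by isotopy and Whitehead moves. If your statement were literally true, $\widetilde{h}_n(\tilde{x}_0)$ would be a point of $T$ independent of $n$ and there would be nothing to prove; the entire danger is precisely that the equivalence can drag the leaf through $\tilde{x}_0$ across the long annulus $A_n$, whose crossing transverse measure $L_n\sin\alpha$ is unbounded. Nor does membership in $T_n$ help: each $T_n$ is invariant under the full deck group and is therefore unbounded.

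The lemmas you cite do not close this hole as stated. Lemma~\ref{transv}(c) and Corollary~\ref{cor5.2} control the position of the truncated center relative to $A_n$ and the transverse measures of arcs across $A_n$; by themselves they do not bound $d_T\bigl(w_0, \widetilde{h}_n(\tilde{x}_0)\bigr)$ for a fixed $w_0 \in T$. To convert them into a basepoint bound you would additionally need the anchoring fact that $\widetilde{h}_n$ coincides with the lift of the model map on the lift of $\partial\Sigma_n$, together with the subharmonicity/maximum-principle argument --- but that is exactly the content of the paper's later Lemma~\ref{modelBound} (used there for the residue computation), which you do not invoke. The paper instead closes the non-escape step by a purely equivariance-based argument, following Lemma 3.4 of \cite{Wolf2}: choose two intersecting simple closed curves $B$ and $C$ transverse to the foliation, with deck transformations $\gamma_B$, $\gamma_C$ preserving axes in $T$; equicontinuity bounds $d_T(\widetilde{h}_n(z), \gamma_B\widetilde{h}_n(z))$ and $d_T(\widetilde{h}_n(z), \gamma_C\widetilde{h}_n(z))$ on a fundamental domain, tree geometry then bounds the distance from $\widetilde{h}_n(z)$ to each axis, and since the two axes diverge in $T$, the images lie in a set of uniformly bounded diameter. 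Either reproduce that axis argument or carry out the Lemma~\ref{modelBound}-type boundary-anchoring argument in full; as written, your boundedness claim is unjustified, and it is the one point where the proof could actually fail.
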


\begin{proof}[Sketch of the proof]
Each map $\widetilde{h}_m$ in the family $\{\widetilde{h}_m\vert_{\widetilde{\Sigma}_n}| m \geq n\}$ is harmonic, hence all are projections along the horizontal foliations of their Hopf differentials to an NPC space. Thus a disk in $\widetilde{\Sigma}_n$ gets taken by each such map into a finite subtree bounded by the image of the boundary of the disk.  
Here we notice and use that such balls contain but a finite number of zeroes, hence have image with but a finite number of vertices.
Thus, the proof of the Courant-Lebesgue Lemma together with the uniform energy bound on the compact domain $\Sigma_n$ shows that these maps $\{\widetilde{h}_m\vert_{\widetilde{\Sigma}_n}| m \geq n\}$ are equicontinuous on $\widetilde{\Sigma}_n$.
Then, we would like to use Arzela-Ascoli to provide for convergence of a subsequence of those maps on that domain.  For this, note  the target $T$  is, in general, a non-locally-compact   $\mathbb{R}$-tree, however by the work in \cite{KorSch2}  the uniform boundedness is all that is needed (see Theorem 2.1.3  and Remark 2.1.4 of that paper).

We provide a sketch of the argument in Lemma 3.4 of \cite{Wolf2} proving the images are uniformly bounded: 

Consider two simple closed curves $B$ and $C$  in $\Sigma_n$ that intersect,  and are transverse to the foliation. Let $\gamma_B$ and $\gamma_C$ be the corresponding elements of the fundamental group that act on the universal cover and tree $T$, preserving bi-infinite axes for each.  In particular, the axes intersect in the image of a fundamental domain $\mathcal{D}$. The key observation is that if $w\in T$, then using the fact that $T$ is a tree  and the action of $\pi_1(\Sigma_n)$ on $T$ is by isometries we find
\begin{center}
$d_T(w,\gamma_B w) > 2d_T(w, \text{axis of }\gamma_B)$
\end{center}
(and similarly for $\gamma_C$). 
By the equivariance of the action of $\pi_1(\Sigma_n)$, we have $\widetilde{h_n}(\gamma_Bz) = \gamma_B\widetilde{h_n}(z)$ and $\widetilde{h_n}(\gamma_Cz) = \gamma_C\widetilde{h_n}(z)$.  Using the equicontinuity bound and the fact that  $\mathcal{D} \cup \gamma_B\mathcal{D} \cup \gamma_C\mathcal{D}$ is of bounded diameter,
 we then have
 \begin{center}
 $d_T(\widetilde{h_n}(z), \widetilde{h_n}(\gamma_Bz))\leq C_1$ and  $d_T(\widetilde{h_n}(z), \widetilde{h_n}(\gamma_Cz))\leq C_1$ 
 \end{center}
 for any $z\in \mathcal{D}$, where $C_1$ is independent of $n$.
  
  From these facts we can derive that both 
  \begin{center}
  $d_T(\widetilde{h_n}(z), \text{axis of } \gamma_B)\leq C_1/2\text{   }$ and $\text{   } d_T(\widetilde{h_n}(z), \text{axis of } \gamma_C)\leq C_1/2$ 
  \end{center}
 for any $z\in \mathcal{D}$. Since these axes $\gamma_B$ and $\gamma_C$ diverge in $T$, the image $\widetilde{h_n}(z)$ lies in a set of uniformly bounded diameter, as required.

As a consequence, Arzela-Ascoli applies, and we obtain a convergent subsequence of the family. A diagonal argument applied to the exhaustion $\{\Sigma_n\}$ of $\Sigma$ then finishes the argument. 
\end{proof}

\subsubsection{Step 3: The Hopf differential has the required foliation} 

Once we have the limiting harmonic map $\tilde{h}:\widetilde{\Sigma \setminus P} \to T$, 
it remains to check that its Hopf differential is the desired one, namely,  that it  descends to $\Sigma \setminus P$ to yield a quadratic differential  that  has:

\begin{itemize}
\item[(a)] poles of order two at the punctures,   
\item[(b)] the required residues at the poles, and 
\item[(c)] an induced measured foliation that is measure-equivalent to $F$.
\end{itemize}

We provide the proof  of these in this subsection. \\

Recall that we have a sequence of equivariant harmonic maps $\widetilde{h_n}:\widetilde{\Sigma_n} \to T$ where $\widetilde{h_n}\to \widetilde{h}$ uniformly on compact sets.

\subsubsection*{Proof of (a)}

Since (see Lemma~\ref{lower}) the equivariant energies of the harmonic map $\bar{\mathcal{E}}(\widetilde{h}_n) \to \infty$ as $n\to \infty$ in the lifts of any neighborhood of each puncture of $\Sigma$, the Hopf differential of $h$ has  poles of order greater or equal to two at any puncture.

So at any fixed puncture, the Hopf differential has the local form $z^{-k}Q(z)dz^2$ for some $k\geq 2$, where $Q(z)$ is a bounded holomorphic function. 

Assume $k>2$.  Then if we uniformize $A_{n}$ to a round annulus of outer radius $1$ and inner radius $0<r<1$, we have the following computation of  energy:
\begin{equation*}
\bar{\mathcal{E}}(\widetilde{h}_n\vert_{\widetilde{A}_{n}}) =\frac{1}{2} \int\limits_0^{2\pi} \int\limits_r^1 \lvert z^{-k}Q(z)\rvert rdrd\theta  \asymp \frac{1}{r^{k-2}}.
\end{equation*}
Now note that $r^{-1} = e^{2\pi\text{mod}(A_{n})}$, so that $\bar{\mathcal{E}}(\widetilde{h}_n\vert_{\widetilde{A}_{n}}) \asymp e^{2\pi(k-2)\text{mod}(A_{n})}$.\\
Yet estimate (\ref{enhn}) asserts that the equivariant energy of the restriction to $\widetilde{A}_n$  is  at most linear in the modulus of $A_n$, so it is not possible that $k>2$.
 (Indeed, for $k=2$ the same calculation above yields an expression that is linear in $\ln r$.)

\subsubsection*{Proof of (b)}
We first refine the previous estimate and show that the distance of each approximating $\widetilde{h}_n$ from the lift of the model map $\widetilde{m}_{R,\alpha}$ is uniformly bounded on their restrictions  on the  strip $\widetilde{A}_n = [0,L_n] \times \mathbb{R}$ (a lift of the annulus $A_{n}$).   Recall that $m_{R,\alpha}$ was the model map corresponding to the desired residue at the pole.

\begin{lem}\label{modelBound}
The distance function  $d_T(\widetilde{h}_n\vert_{\widetilde{A}_n}, \widetilde{m_{R,\alpha}}\vert_{\widetilde{A}_n}) < d_0$ where $d_0$ is  independent of $n$.
\end{lem}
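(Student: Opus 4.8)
The plan is to show that on the strip $\widetilde{A}_n$ the two maps $\widetilde{h}_n$ and $\widetilde{m}_{R,\alpha}$ stay uniformly close by controlling them on the boundary of the strip and then propagating the bound into the interior via the harmonicity of $\widetilde{h}_n$. First I would observe that both maps are equivariant with respect to the same $\mathbb{Z}$-action (translation by $R$ on the domain, by $R\cos\alpha$ on the target), so it suffices to bound the distance on a single fundamental domain $D = [0,L_n]\times[0,R]$, and by equivariance the supremum of $d_T(\widetilde{h}_n,\widetilde{m}_{R,\alpha})$ is attained along the two ends $x=0$ and $x=L_n$ of the strip (or in the interior, which the maximum principle will rule out).

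The key analytic input is that $\widetilde{h}_n$, being a harmonic map to the NPC tree $T$, has a convex distance function: for two maps into an $\mathbb{R}$-tree with one of them harmonic, $z\mapsto d_T(\widetilde{h}_n(z),\widetilde{m}_{R,\alpha}(z))$ is subharmonic (this is the same chain-rule fact already invoked in the uniqueness argument of \S5.1, via \cite{KorSch1} or \cite{Wolf2}). Hence by the maximum principle the distance on the fundamental domain $D$ is bounded by its values on the two boundary arcs $\{x=0\}$ and $\{x=L_n\}$. On the inner end $\{x=L_n\}$ I would use Lemma~\ref{resmod}: the quadratic differential $q_n$ near $\partial\Sigma_n$ agrees up to bounded perturbation with the model differential of residue $a$, so the collapsing map $\widetilde{h}_n$ there is within a fixed distance of $\widetilde{m}_{R,\alpha}$, with the bound depending only on $a$ (hence on $R,\alpha$) and not on $n$. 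On the outer end $\{x=0\}=\partial\Sigma_0$ I would invoke the uniform energy bound (Lemma~\ref{ebd}) together with the Courant--Lebesgue estimate and the equicontinuity established in the convergence lemma: the restrictions $\widetilde{h}_n\vert_{\widetilde{\Sigma}_0}$ have uniformly bounded image diameter, so their distance from the (fixed) model map restricted to $\partial\Sigma_0$ is bounded independently of $n$.

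Combining these two boundary bounds with subharmonicity gives the interior bound $d_0$ on all of $D$, and by equivariance on all of $\widetilde{A}_n$. The main obstacle, I expect, is making the boundary comparison on the inner end genuinely uniform in $n$: one must check that the constant arising from Lemma~\ref{resmod} does not degrade as $n\to\infty$, i.e.\ that the truncated center $A(q_n)$ really does restrict, near $\partial\Sigma_n$, to (a bounded perturbation of) the model cylindrical end with the \emph{same} parameters $R,\alpha$ for every $n$. This is exactly where Lemma~\ref{truncc} and the positional control of the truncated center from Lemma~\ref{transv} and Corollary~\ref{cor5.2} are needed: they guarantee the foliated annulus realizing the spiralling end is, up to a uniformly bounded amount, in the expected place, so that the model map used for comparison is literally the fixed $\widetilde{m}_{R,\alpha}$ rather than an $n$-dependent competitor. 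A secondary technical point is that the target tree $T$ is not locally compact, so any maximum-principle or subharmonicity argument must be phrased in the intrinsic metric-space sense (convexity of distance functions for harmonic maps to NPC spaces, as in \cite{KorSch1},\cite{Mese}) rather than through classical potential theory.
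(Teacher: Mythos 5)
Your overall skeleton---reduction to a fundamental domain by equivariance, subharmonicity of the distance function, and the maximum principle on the quotient annulus to propagate boundary bounds into the interior---is exactly the paper's. (One precision: subharmonicity of $d_T(u,v)$ requires \emph{both} maps to be harmonic, not just one; this is harmless here because $\widetilde{m}_{R,\alpha}$ is itself harmonic, being linear in the flat coordinates, but your stated justification is off.) The genuine problem is your treatment of the inner end $\{x=L_n\}$. Lemma~\ref{resmod} cannot be invoked there: $q_n$ is holomorphic in a neighborhood of $\partial\Sigma_n$ (it is the restriction of the Hubbard--Masur differential $\hat{q}_n$ on the closed double, so it extends across the boundary), hence there is no order-two pole, no residue, and no Laurent-tail perturbation estimate to appeal to. The paper needs nothing of the sort: by Lemma~\ref{truncc} the foliation $\mathcal{F}(q_n)$ restricted to the truncated center $A(q_n)$---which has $\partial\Sigma_n$ as a boundary component---is \emph{identical} to a truncation of the model end with parameters $R,\alpha$, so $\widetilde{h}_n$ \emph{coincides} with $\widetilde{m}_{R,\alpha}$ on the lift of $\partial\Sigma_n$; the distance there is zero, not merely bounded. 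Your hedge (that the positional control of Lemma~\ref{transv} and Corollary~\ref{cor5.2} is ``exactly where'' uniformity comes from) names the right ingredients but leaves the actual estimate undone; in particular it does not address the phase issue: closeness of $\widetilde{h}_n$ near $\partial\Sigma_n$ to \emph{some} model map does not fix the additive translation of its image along the relevant axis in $T$, which is precisely what a quantitative version of your inner-end bound would have to pin down.

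Your outer-end anchor also differs from the paper's, and there the difference is legitimate rather than a gap. The paper takes an arc $\gamma$ from a point $p_0\in\partial\Sigma_0$ to $\partial\Sigma_n$ and uses Corollary~\ref{cor5.2}: the transverse measure of $\gamma$ is $L_n\sin\alpha$ up to the uniform additive constant $d_0 = T_0 + R\cos\alpha/2$, so that, anchored by the exact coincidence of the two maps at the inner end, the points $\widetilde{h}_n(p_0)$ and $\widetilde{m}_{R,\alpha}(p_0)$ are within $d_0$ of each other. Your alternative---uniform boundedness of the images $\widetilde{h}_n(\widetilde{\Sigma}_0)$ from the axes argument in the convergence lemma, compared against the fixed image of $\widetilde{m}_{R,\alpha}$ on the lift of $\partial\Sigma_0$---is not circular (the convergence lemma precedes Lemma~\ref{modelBound} in the paper's logic) and would indeed give a uniform bound at that boundary. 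But it cannot substitute for the inner-end step, which in your scheme remains unproved. To repair the proof, replace the appeal to Lemma~\ref{resmod} by the observation above from Lemma~\ref{truncc} (or, equivalently, run the paper's transverse-measure comparison along longitudinal arcs), after which the maximum principle finishes the argument exactly as you describe.
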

\begin{proof}
We first consider the  above distance function 
on the lifts of $\partial A_n = \partial\Sigma_0 \sqcup \partial\Sigma_n$.

First, by construction, the map $\widetilde{h_n}$ 
coincides with the  lift of the model map $\tilde{m}_{R,\alpha}$ on the lift of $\partial \Sigma_n$, so our attention turns to the distance between the two maps on the lift of $\partial\Sigma_0$.

To that end, choose a point, say $p_0$, on the boundary $\partial\Sigma_0$, and connect it by an arc $\gamma$ to a point on the 
boundary $\partial\Sigma_n$.  
Corollary~\ref{cor5.2} then asserts that the transverse measure of $\gamma$ with respect to the foliation $\mathcal{F}(q_n)$ is $L_n \sin \alpha$, up to an additive distortion $d_0 := T_0 + R\cos\alpha/2$  which is itself uniformly bounded (independent of $n$).  
Thus the image $\widetilde{h_n}(\tilde{\gamma})$ of the lift $\tilde{\gamma}$ of this arc under $\widetilde{h_n}$ has length $L_n \sin \alpha$, again up to an additive distortion of $d_0$.  
Since the length of the image of $\tilde{\gamma}$ by $\widetilde{m_{R,\alpha}}$ is exactly $L_n \sin \alpha$, so we obtain $d_T(\widetilde{h}_n(p_0), \widetilde{m_{R,\alpha}}(p_0) < d_0$.  As the choice of $p_0$ on the lift of $\partial \Sigma_0$ was arbitrary, we can conclude that the distance function is uniformly bounded on the lift of that boundary component of $A_n$ as well.

  However the distance function $d_T$  is subharmonic, as well as invariant under deck transformations. Hence the Maximum Principle applies to the function $d_T$ on the quotient $A_n$, and we find that $d_T$ is bounded in the interior of the strip by $d_0$, which is independent of $n$.
  \end{proof}

We have already proved the uniform convergence $\widetilde{h}_n \to \widetilde{h}$, hence the images of the lifts of $\partial \Sigma_0$ under $\widetilde{h}_n$ are uniformly bounded. 
Hence the limiting harmonic map $h$ is asymptotic to (\textit{i.e.} bounded distance from) the model map $m_{R,\alpha}$  on the punctured disk $U \setminus P = \bigcup\limits_n A_n$, which is a topological  end of the surface $\Sigma \setminus P$.

By  Lemma \ref{dist}, the argument $\alpha$ of the residue $a$ at the pole agrees with that of the model map $m_{R,\alpha}$, and so does its real part which is determined by the transverse measure of the foliation $F$ (see equation (\ref{trans}) in \S1). 

When this transverse measure is positive, the above real part is $R\cos\alpha$. In this case, knowing $R\cos\alpha$ and $\alpha$ determines $R$, the modulus $\lvert a \rvert$  of the residue as well. 
This shows that the map $h$ has the required residue at the pole, in this case.

When the transverse measure is zero, we are in the case of a closed center at the pole, and the only parameter to determine is $R$. The diameter in $T$ of the image of  a lift of $A_n$ under the model map $\tilde{m}_{R,\pm \frac{\pi}{2}}$ is $L_n$, which is related to the modulus of the annulus by  $L_n= R\text{mod}(A_n)$  (see (\ref{ln}). Since $\text{mod}(A_n) \to \infty$ as $ n\to \infty$,  by Lemma \ref{modelBound} we have
\begin{equation*}
\frac{ \text{diam}\left(\widetilde{h}_n ({\widetilde{A}_n})\right)}{\text{mod}(A_n)}  = \frac{L_n + O(1)}{ \text{mod}(A_n)} \to R  \text{ as } n \to \infty 
\end{equation*}
and this specifies $R$ uniquely.

Hence the  map $h$ has the required residue at the pole, concluding the proof of (b).

\subsubsection*{Proof of (c)}
Recall that by construction, each map $\widetilde{h}_n$ collapses along a foliation that is equivalent to $\widetilde{F}\vert_{\widetilde{\Sigma}_n}$.  Also, note that like the Dehn-Thurston parameters, the ``intersection numbers" of a measured foliation with \textit{all} simple closed curves  (that is, the transverse measures of the curves) also serve to characterize the measured foliation   - see, for example, \cite{FLP}.

Let $\widetilde{F}^\prime$ be the collapsing foliation of $\widetilde{h}$, the uniform limit of $\widetilde{h}_n$ as $n\to \infty$. By the equivariance of $\widetilde{h}$, this descends to a measured foliation $F^\prime$ on the surface $\Sigma \setminus P$.  Similarly, the collapsing foliation of each $\widetilde{h}_n$ descends to define a  measured foliation on $\Sigma_n$, that we call $F^\prime_n$.

 To verify that the collapsing foliation $F^\prime$  is equivalent to $F$, one needs to check that the intersection number of $F^\prime$  with any simple closed curve, and the loop around the puncture, agrees with that of $F$.

First, the intersection numbers agree for the loop linking the puncture, since the parameters $R, \alpha$ of the model map were chosen so that the transverse measure of the loop is $R\cos\alpha$.  

Second,   any given simple closed curve $\gamma$ on $\Sigma$  lies in a  compact set $K \subset \Sigma_m$ for $m$ sufficiently large, and hence its intersection number with $F^\prime_n$  coincides with  $i(\gamma,F)$ for all $n\geq m$. However, since the convergence $\widetilde{h}_n\to \widetilde{h}$ is uniform on a lift of the compact set $K$, the collapsing foliations of $\widetilde{h}_n$ converge to $\widetilde{F}^\prime$ on $K$ (as is standard, the $C^0$-convergence of the sequence of holomorphic Hopf differentials can be bootstrapped to a $C^1$-convergence by the Cauchy integral formula), and hence the intersection numbers with $\gamma$ converge to  $i(\gamma, F^\prime)$.  Thus this limiting intersection number $i(\gamma, F^\prime)$ equals $i(\gamma, F)$, concluding the argument.

\bibliographystyle{amsalpha}
\bibliography{qdref3}

\end{document}